\theoremstyle{definition}
\newtheorem{theorem}{Theorem}
\newtheorem{proposition}[theorem]{Proposition}
\newtheorem{definition}[theorem]{Definition}
\newtheorem{corollary}[theorem]{Corollary}
\newtheorem{lemma}[theorem]{Lemma}
\newtheorem{remark}{Remark}
\newcommand{\ind}[1]{\mathbbm{1}_{\left\{#1\right\}}}
\newcommand{\norm}[1]{\left|\left|#1\right|\right|}
\newcommand{\floor}[1]{\left\lfloor#1\right\rfloor}
\newcommand{\ceil}[1]{\left\lceil#1\right\rceil}
\newcommand{\map}[3]{#1 : #2 \longrightarrow #3}
\newcommand{\set}[2]{\left\{#1 : #2\right\}}
\newcommand{\indc}[1]{\mathbbm{1}_{#1}}
\newcommand{\defeq}{\vcentcolon=}
\newcommand{\eqdef}{=\vcentcolon}
\newcommand{\rhomin}{\rho_{\min}}
\newcommand{\rhomax}{\rho_{\max}}
\newcommand{\prob}{\mathbbm{P}}
\newcommand{\calA}{\mathcal{A}}
\newcommand{\calD}{\mathcal{D}}
\newcommand{\calF}{\mathcal{F}}
\newcommand{\calJ}{\mathcal{J}}
\newcommand{\calK}{\mathcal{K}}
\newcommand{\calN}{\mathcal{N}}
\newcommand{\calP}{\mathcal{P}}
\newcommand{\scdot}{{}\cdot{}}
\newcommand{\bd}{\mathrm{bd}}
\newcommand{\eq}{\mathrm{eq}}
\newcommand{\N}{\mathbbm{N}}
\newcommand{\R}{\mathbbm{R}}
\newcommand{\Z}{\mathbbm{Z}}
\newcommand{\e}{\mathrm{e}}
\tikzstyle{result} = [rectangle, rounded corners, minimum width = 3.6cm, text width = 3.6cm, minimum height = 1.5cm, text centered, draw = black, thick]
\tikzstyle{main} = [rectangle, rounded corners, minimum width = 3.6cm, text width = 3.6cm, minimum height = 1.5cm, text centered, draw = black, very thick]
\tikzstyle{arrow} = [thick, ->, > = stealth]
\pgfplotsset{
	compat = 1.16,
	every axis/.append style = {
		grid style = {dashed, gray, opacity = 0.2},
		label style = {font = \footnotesize},
		tick label style = {font = \footnotesize},  
		width = 1 * \columnwidth,
		height = 0.618 * 1 * \columnwidth
	}
}
\definecolor{britishracinggreen}{rgb}{0.0, 0.26, 0.15}
\definecolor{bostonuniversityred}{rgb}{0.8, 0.0, 0.0}
\definecolor{ceruleanblue}{rgb}{0.16, 0.32, 0.75}
\definecolor{airforceblue}{rgb}{0.36, 0.54, 0.66}
\definecolor{cadmiumgreen}{rgb}{0.0, 0.42, 0.24}
\definecolor{ao(english)}{rgb}{0.0, 0.5, 0.0}
\definecolor{coolblack}{rgb}{0.0, 0.18, 0.39}
\definecolor{alizarin}{rgb}{0.82, 0.1, 0.26}
\definecolor{arsenic}{rgb}{0.23, 0.27, 0.29}
\definecolor{cobalt}{rgb}{0.0, 0.28, 0.67}
\definecolor{amber}{rgb}{1.0, 0.75, 0.0}
\title{Learning and balancing unknown loads \\ in large-scale systems \vspace{\baselineskip}}
\author{
\normalsize{Diego Goldsztajn, Sem C. Borst}\\ \footnotesize{Eindhoven University of Technology, d.e.goldsztajn@tue.nl, s.c.borst@tue.nl} \\
\normalsize{Johan S.H. van Leeuwaarden}\\ \footnotesize{Tilburg University, j.s.h.vanleeuwaarden@uvt.nl} \\
}
\date{\vspace{\baselineskip} \normalsize{April 5, 2024}}
\begin{document}

\maketitle

\noindent\rule{\textwidth}{1pt}

\vspace{2\baselineskip}

\onehalfspacing

\begin{adjustwidth}{0.7cm}{0.7cm}
	\begin{center}
		\textbf{Abstract}
	\end{center}
	
	\vspace{0.3\baselineskip}
	
	\noindent Consider a system of identical server pools where tasks with exponentially distributed service times arrive as a time-inhomogenenous Poisson process. An admission threshold is used in an inner control loop to assign incoming tasks to server pools while, in an outer control loop, a learning scheme adjusts this threshold over time to keep it aligned with the unknown offered load of the system. In a many-server regime, we prove that the learning scheme reaches an equilibrium along intervals of time where the normalized offered load per server pool is suitably bounded, and that this results in a balanced distribution of the load. Furthermore, we establish a similar result when tasks with Coxian distributed service times arrive at a constant rate and the threshold is adjusted using only the total number of tasks in the system. The novel proof technique developed in this paper, which differs from a traditional fluid limit analysis, allows to handle rapid variations of the first learning scheme, triggered by excursions of the occupancy process that have vanishing size. Moreover, our approach allows to characterize the asymptotic behavior of the system with Coxian distributed service times without relying on a fluid limit of a detailed state descriptor.
	
	\vspace{\baselineskip}
	
	\small{\noindent \textit{Key words:} many-server asymptotics, time-varying arrival rates, phase-type service times.}
	
	\vspace{0.3\baselineskip}
	
	\small{\noindent \textit{Acknowledgment:} supported by Netherlands Organisation for Scientific Research (NWO) through Gravitation-grant NETWORKS-024.002.003 and Vici grant 202.068.} 
\end{adjustwidth}

\section{Introduction}
\label{sec: introduction}

Consider a service system where tasks are instantaneously dispatched to one out of $n$ identical server pools. All the tasks sharing a server pool are executed in parallel and task durations do not depend on the number of tasks contending for service. Nevertheless, the portion of shared resources available to individual tasks does depend on the number of tasks sharing a server pool and determines the quality of service perceived by tasks. These features are characteristic of adaptive streaming and online gaming services, where task durations are mainly determined by the application but the quality of service (e.g., in terms of video resolution) degrades as the degree of contention rises, creating an incentive to balance the load.

In order to maintain the number of tasks balanced across the server pools, the system must judiciously dispatch the incoming tasks. It was established in \cite{menich1991optimality,sparaggis1993extremal} that Join the Shortest Queue (JSQ) is the optimal non-anticipating load balancing policy under rather general conditions. More precisely, consider a system of parallel single-server queues where the sequence of arrival times is general and the service times are exponential with a rate that depends on the number of tasks in the queue in a concave and non-decreasing manner. Then the vector describing the number of tasks at each queue remains the \emph{most balanced} when JSQ is used, in a stochastic majorization sense and at all times; see \cite{marshall1979inequalities} for precise definitions. The result of \cite{menich1991optimality,sparaggis1993extremal} has been applied in the classic load balancing literature to the single-server supermarket model, where it in particular implies that JSQ stochastically minimizes the total number of tasks in the system over time under the above assumptions. Thus, JSQ also minimizes the total number of tasks in stationarity, and hence the mean delay of tasks by virtue of Little's law.

The mean delay of tasks is totally meaningless in the infinite-server setup that we consider since the sojourn times of tasks are independent of the load balancing policy; note that the total number of tasks at any given time is thus also independent of how tasks are dispatched. More pertinent performance metrics are utility functions that measure the quality of service provided to tasks. Specifically, let $x_i$ denote the number of tasks sharing server pool $i$ at some given time, and suppose that the quality of service provided to a task depends on the fraction $1 / x_i$ of resources received through a concave and non-decreasing function $\varphi$. The aggregate quality of service provided to all tasks is $\phi(x) \defeq \sum_{i = 1}^n x_i \varphi(1 / x_i)$, which is a Schur-concave function of $x \defeq (x_1, \dots, x_n)$. The form of $\phi$ and the choice of $\varphi$ subsume the family of $\alpha$-fair utility functions, which have been a central paradigm in network utility maximization problems and rate control policies for bandwidth-sharing networks \cite{mo2000fair,srikant2013communication}. They also cover a wide range of concave utility functions that have been specifically proposed as proxies for Quality-of-Experience (QoE) metrics in the context of video streaming applications \cite{hu2012qoe,joseph2015optimal}. The result of \cite{menich1991optimality,sparaggis1993extremal} applies to the infinite-server setting that we consider when task durations are exponential, and implies that JSQ stochastically maximizes any Schur-concave utility $\phi$ over time as these functions are monotone with respect to majorization.

Despite the above optimality properties, JSQ is not prevalent in practice because it requires knowledge of the exact number of tasks at each server or server pool, and maintaining such detailed state information can be problematic in large-scale systems, as noted in \cite{lu2011join}. Motivated by the latter issue, we consider a threshold-based dispatching rule that involves only limited state information, and we establish that it can maintain a balanced distribution of the load in a limiting regime, even when the arrival rate of tasks is time-varying or the task durations are non-exponential. This dispatching rule also involves knowledge of the offered load; we assume that tasks with mean duration $1 / \mu$ arrive at rate $n \lambda$ and we define the offered load of the system as $n \rho$, where $\rho \defeq \lambda / \mu$. The knowledge of the offered load has to be obtained in an online manner to set the threshold optimally, since $\rho$ is typically not exactly known or even time-varying.

More precisely, we examine the load balancing policies that result from blending the latter dispatching rule with one of two learning schemes for dynamically tracking the offered load. Namely, a \emph{basic scheme}, which keeps track of the total number of tasks in the system and uses this quantity to estimate the offered load, and a \emph{refined scheme}, which uses more detailed state information. An orthogonal dimension of the problem, which is also addressed in this paper, concerns the underlying traffic scenario; i.e., the distribution of the service times and the characteristics of the arrival process. We focus on two settings: first we assume that tasks with exponentially distributed service times arrive as a time-inhomogeneous Poisson process (i.e., $\lambda$ and $\rho$ are time-varying), and that the refined learning scheme is used to adjust the threshold over time; then we assume that tasks with Coxian distributed service times arrive at a constant rate, and that the basic learning scheme is used to estimate the offered load. The analysis of the former setting emphasizes the learning aspect of the problem, whereas the latter setting highlights the role of the service time distribution. We also indicate how the results established in this paper can be extended to other possible combinations of the underlying traffic scenario and the implemented learning scheme.

Altogether, the load balancing policies considered in this paper integrate two control loops. In the inner control loop, the dispatching rule aims at maintaining the number of tasks at each of the server pools between two given values; these values are always consecutive multiples of a positive integer $\Delta$, which is a parameter of the policy, and the lowest of these values is referred to as the threshold and is denoted by $\ell_n$. In the outer control loop, the learning scheme adjusts the threshold over time to maintain the normalized offered load $\rho$ between $\ell_n$ and $\ell_n + \Delta$. Therefore, the parameter $\Delta$ governs a tradeoff between the degree of load balance and the stability of the learning scheme. Specifically, as $\Delta$ decreases, the dispatching rule is more stringent in terms of the load balancing objective and the learning scheme is more sensitive to demand variations.

The control actions of the dispatching rule do not influence the basic learning scheme since the total number of tasks in the system does not depend on these actions. In contrast, there is a complex interdependence between the inner and outer control actions when the refined learning scheme is used. Namely, the actions of the refined learning scheme are triggered by changes in the occupancy state of the system, while at the same time, the evolution of the occupancy state is governed by the dispatching rule, which depends on the output of the refined learning scheme.

\subsection{Main contributions}

Our main result for the refined scheme, in the time-inhomogeneous exponential scenario, is that the threshold reaches an equilibrium value in intervals of time where $\rho$ is suitably bounded and that the load is balanced to an extent that depends on $\Delta$. Specifically, assume that there exist $m \in \N$ and an interval $[a, b]$ such that $m \Delta < \rho(t) < (m + 1)\Delta$ if $t \in [a, b]$. We establish that there exists $\sigma$ such that the following properties hold with probability one: $\ell_n(t) = m\Delta$ for all $t \in [a + \sigma, b]$ and all large enough $n$, the supremum over $[a + \sigma, b]$ of the fraction of server pools with less than $m \Delta$ tasks goes to zero with $n$, and the fraction of server pools with more than $(m + 1)\Delta$ tasks is $O(1)\e^{-\mu\left[t - (a + \sigma)\right]}$ for all $t \in [a + \sigma, b]$.

As noted earlier, the total number of tasks $N_n$ is independent of the dispatching rule. Thus, any of the possible distributions of the load at time $t$ majorizes that where all the server pools have either $\floor{N_n(t) / n}$ or $\ceil{N_n(t) / n}$ tasks, and in particular provides a lower aggregate quality of service for any Schur-concave utility function $\phi$. Under the above-stated assumptions, we establish that $nm\Delta < N_n(t) < n(m + 1)\Delta$ for all $t \in [a + \sigma, b]$ and all large enough $n$. If $\Delta = 1$, then this implies that the load tends to be perfectly balanced, because all the server pools have either $\floor{N_n(t)/n}$ or $\ceil{N_n(t) / n}$ tasks, except for a fraction that is at most $o(1) + O(1)\e^{-\mu\left[t - (a + \sigma)\right]}$. Further, we show that the term that does not vanish with $n$ cannot be avoided without weakening our assumptions, not even by JSQ. For example, if the arrival rate of tasks is constant and a fraction of the server pools have more than $\ceil{\rho}$ tasks at time zero, then a fraction of the server pools that is $O(1)\e^{-\mu t}$ has more than $\ceil{\rho}$ tasks at any time $t$, regardless of the load balancing policy. While the load is more coarsely balanced as $\Delta$ increases, the condition $m \Delta < \rho(t) < (m + 1)\Delta$ relaxes.

Analogous results are obtained for the basic scheme in the time-homogeneous Coxian setting. More precisely, we prove that the threshold $\ell_n$ reaches an equilibrium value for all large enough $n$ with probability one, and then the load remains balanced to an extent that depends on $\Delta$ as described above. The load tends to be perfectly balanced if $\Delta = 1$.

In order to prove our main results, we develop a novel methodology which is fundamentally different from a traditional fluid limit analysis. In particular, our approach does not decouple the analysis across the scale parameter and the time variable as in a traditional fluid limit approach, where first a sequence of scaled processes is proven to converge to a solution of a certain system of differential equations and then stability results are established for this limiting dynamical system. Instead, our methodology leverages the tractability of specific system dynamics to identify key dynamical properties of the threshold and occupancy processes, and uses strong approximations to prove that these dynamical properties hold asymptotically as $n$ grows large; these asymptotic dynamical properties are then employed to establish our main results.

An important motivation for our novel approach is the nature of the refined learning scheme, which allows for rapid changes of the threshold, triggered by small excursions of the occupancy process. Specifically, these excursions trigger changes of the threshold if their size exceeds a parameter of the learning scheme which vanishes on the fluid scale as $n$ grows large, making a traditional fluid limit analysis especially challenging, if not impossible. Furthermore, our approach allows to derive our main result for the Coxian setting in a direct way which avoids proving a fluid limit for a detailed state descriptor of the system; if tasks can undergo at most $r$ phases, then such a state descriptor could be given by a multidimensional sequence in which the element $(j_1, \dots, j_r)$ indicates the fraction of server pools with exactly $j_m$ tasks in phase $m$.

The analysis of the refined learning scheme in the time-inhomogeneous exponential scenario is more challenging from a learning perspective, whereas the analysis of the basic learning scheme in the time-homogeneous Coxian scenario involves a higher complexity in terms of the service time distribution since the residual service time distribution of the tasks in the system is heterogeneous. Nevertheless, we demonstrate that our methodology can be applied in both of these substantially distinct settings. Moreover, we believe that the principle underlying this methodology, of leveraging process-level dynamical properties through strong approximations, is of broader applicability, and can be particularly useful in situations where fluid limits are unavailable or difficult to derive.

\subsection{Related work}
\label{sub: related work}

The problem of balancing the load in parallel-server systems has received immense attention in the past few decades; a recent survey is provided in \cite{van2018scalable}. While traditionally the focus in this literature used to be on performance, more recently the implementation overhead has emerged as an equally important issue. This overhead has two sources: the communication burden of exchanging messages between the dispatcher and the servers, and the cost in large-scale deployments of storing and managing state information at the dispatcher \cite{gamarnik2018delay,gamarnik2020lower}.

As noted earlier, the present paper concerns an \emph{infinite-server} setting in the sense that task durations do not depend on the number of competing tasks. However, the level of contention does affect the share of the available resources received by each of the tasks, e.g., in terms of bandwidth for adaptive streaming applications. Infinite-server models of this kind have been commonly adopted in the literature as a natural paradigm for describing the dynamics and evaluating the performance of streaming sessions on flow-level; e.g., \cite{benameur2002quality,key2004fair}.

In contrast, the load balancing literature has been predominantly concerned with single-server models, where performance is measured in terms of queueing delays. In the latter scenario, JSQ minimizes the mean delay for exponential service times \cite{ephremides1980simple,winston1977optimality}, but a naive implementation of JSQ involves an excessive communication burden for large-scale systems. The so-called power-of-$d$ strategies \cite{mitzenmacher2001power,mukherjee2016universality,vvedenskaya1996queueing} involve substantially less communication overhead and yet provide significant improvements in delay performance over purely random routing. Another alternative are pull-based policies~\cite{badonnel2008dynamic,stolyar2015pull}, which reduce the communication burden by maintaining state information at the dispatcher. Particularly, the Join the Idle Queue (JIQ) policy \cite{lu2011join,stolyar2015pull} asymptotically matches the optimality of JSQ and involves only one message per task, by keeping a list of idle queues.

With the exception of \cite{stolyar2015pull}, the papers listed above only consider exponentially distributed service times, and their results have only been extended partially or under stringent assumptions to more general service times. In particular, \cite{stolyar2015pull,stolyar2017pull} treat a JIQ system where service times are assumed to have a decreasing hazard rate function, whereas \cite{foss2017large} considers a JIQ system with general service times but only under the assumption that the load is strictly below~$1/2$. Also, power-of-$d$ policies with general service time distributions were studied in \cite{bramson2010randomized,bramson2012asymptotic}. Relying on a \emph{propagation of chaos} ansatz, \cite{bramson2010randomized} establishes \emph{power-of-choice} benefits similar to those reported in \cite{mitzenmacher2001power,vvedenskaya1996queueing} for exponential service times; the ansatz is proved in \cite{bramson2012asymptotic} when the service time distribution has decreasing hazard rate or the arrival rate is sufficiently small. Power-of-$d$ schemes were also studied in \cite{aghajani2018pde,aghajani2019hydrodynamic} through measure-valued processes, proving that a suitably scaled sequence of processes converges to a hydrodynamic limit. More closely related to the Coxian setting in this paper is \cite{vasantam2017mean}, which examines mean-field limits for power-of-$d$ policies in loss systems with mixed-Erlang service time distributions.

As mentioned above, the infinite-server setting considered in this paper has received only limited attention in the load balancing literature, even for exponentially distributed service times. While queueing delays are hardly meaningful in this setting, load balancing still plays a crucial role in optimizing different performance measures. Besides loss probabilities in finite-capacity scenarios, other relevant performance measures are Schur-concave utility metrics associated with quality of service in streaming applications, such as the above-mentioned function $\phi$. As noted earlier, it was established in \cite{menich1991optimality,sparaggis1993extremal} that JSQ is the optimal non-anticipating load balancing policy, with respect to stochastic majorization, when service times are exponential. It was proven in \cite{mukherjee2020asymptotic} that the performance of JSQ can be asymptotically matched on the diffusion scale by certain power-of-$d$ strategies which considerably reduce the associated communication overhead.

Just like the dispatching algorithms studied in \cite{mukherjee2020asymptotic}, the policies considered in this paper aim at optimizing the overall quality of service experienced by tasks. The refined version of our policy was considered in \cite{goldsztajn2021self}, assuming exponentially distributed service times and a constant arrival rate of tasks. Resorting to a traditional fluid limit analysis, \cite{goldsztajn2021self} proves fluid-scale optimality provided that a parameter of the learning scheme satisfies a certain condition, stated in terms of the unknown offered load per server pool $\rho$. The present paper establishes how this parameter of the refined learning scheme should scale with the number of server pools, and strengthens the latter result by proving, through a novel methodology, that fluid-scale optimality is always attained with the proposed parameter scaling, even if the arrival rate of tasks is time-varying.

As alluded to above, one of the most interesting features of our policies is their capability of adapting the threshold to unknown time-varying loads. The problem of adaptation to uncertain demand patterns was addressed in the context of single-server queueing models in \cite{goldsztajn2021automatic,goldsztajn2018controlling,mukherjee2017optimal,mukherjee2019join}, which assume that the number of servers can be \emph{right-sized} on the fly to match the load. However, in all these papers the dispatching algorithm remains the same at all times since the right-sizing mechanism alone is sufficient to maintain small queueing delays when the demand for service changes. Different from these right-sizing mechanisms, the learning schemes considered in this paper modify the dispatching rule of the system over time to maintain a balanced distribution of the load while the number of server pools remains constant. 

\subsection{Outline of the paper}

In Section \ref{sec: model description} we specify the dispatching rule, the learning schemes and the traffic scenarios studied in this paper. The notation used in the paper and some technical assumptions are introduced in Section \ref{sec: notation and assumptions}, and our main results are stated in Section~\ref{sec: main results}, where we also outline our methodology and discuss some extensions. The proof of our main result for the refined learning scheme, in the time-inhomogenenous exponential scenario, is carried out in Section~\ref{sec: refined learning scheme}, and the proof of our main result for Coxian distributed service times, in the time-homogeneous scenario with the basic learning scheme, is provided in Section \ref{sec: coxian service times}. The proofs of some intermediate results are provided in Sections \ref{app: relative compactness} and \ref{app: auxiliary results} of the appendix.

\section{Model description}
\label{sec: model description}

Consider a system of $n$ identical server pools. Each server pool has infinitely many servers, tasks arrive as a possibly time-varying Poisson process of intensity $n \lambda$ and service times are independent and identically distributed with mean $1 / \mu$. In order to maximize the overall quality of service provided to tasks, it is necessary to maintain an even distribution of the load. Specifically, if the total number of tasks at a given time is $N_n$, then any Schur-concave utility function, as defined in Section \ref{sec: introduction}, is maximized if all server pools have either $\floor{N_n / n}$ or $\ceil{N_n / n}$ tasks at that time; here $\floor{\scdot}$ and $\ceil{\scdot}$ denote the floor and ceiling functions, respectively.

\subsection{Dispatching rule}
\label{sub: dispatching rule}

Recall that if service times are exponential, then JSQ achieves the highest degree of load balance among the non-anticipating policies. Specifically, given two systems with the same initial condition, one of which uses JSQ, it was proven in \cite{sparaggis1993extremal} that it is possible to couple the arrival and service completion times so that the load is \emph{more balanced} in the JSQ system, with respect to majorization, at all times. As noted in Section \ref{sec: introduction}, the result obtained in \cite{sparaggis1993extremal} covers any sequence of arrival times and implies that any Schur-concave utility is stochastically maximized by JSQ at any given time when the service times are exponential. However, JSQ requires complete state information about the total number of tasks at each server pool, making it challenging to implement in large systems, as further discussed in Section \ref{sub: implementation overheads}.

Suppose briefly that $\lambda$ is constant over time and recall that $\rho \defeq \lambda / \mu$ is the offered load per server pool; for brevity we often use the shorter term offered load. The total number of tasks in steady state is Poisson distributed with mean $n\rho$ regardless of the load balancing policy, because all server pools together form an infinite-server system. Ideally, all the server pools should have at least $\floor{\rho}$ tasks and no more than $\ceil{\rho}$ tasks. A natural way to achieve this without requiring full state information is by means of a threshold-based dispatching rule. Namely, we consider a dispatching algorithm that uses an integral parameter $\Delta \geq 1$ and a threshold $\ell_n \in \N$ to distribute the incoming tasks among the $n$ server pools. This algorithm aims at maintaining the number of tasks at each server pool between $\ell_n$ and $h_n \defeq \ell_n + \Delta$ by dispatching every incoming task as follows.
\begin{itemize}
	\item If some server pool has strictly less than $\ell_n$ tasks right before the arrival, then the new task is sent to a server pool having strictly less than $\ell_n$ tasks, chosen uniformly at random.
	
	\item If all server pools have at least $\ell_n$ tasks right before the arrival, and some server pool has strictly less than $h_n$ tasks, then the new task is sent to a server pool chosen uniformly at random among those having at least $\ell_n$ and strictly less than $h_n$ tasks.
	
	\item If all server pools have at least $h_n$ tasks right before the arrival, then the new task is sent to a server pool having at least $h_n$ tasks, chosen uniformly at random.
\end{itemize}

This threshold-based dispatching rule was analyzed in \cite{goldsztajn2021self} for $\Delta = 1$ and $\ell_n = \floor{\rho}$. While JSQ is the optimal policy, the latter paper proves that the threshold-based dispatching rule has the same fluid and diffusion limits as JSQ when the arrival rate of tasks is constant and service times are exponential; this means that the processes that describe the occupancy state of each policy satisfy the same law of large numbers and central limit theorem as $n \to \infty$, respectively.

\subsubsection{Implementation overhead}
\label{sub: implementation overheads}

There are two natural implementations of JSQ; in both implementations we assume that tasks are assigned to the server pools by a single dispatcher.
\begin{enumerate}
	\item[(a)] When a task arrives, the dispatcher polls the $n$ server pools in order to know the exact number of tasks at each server pool. Then the dispatcher chooses one of the server pools with the least number of tasks, and only afterwards sends the task to this server pool.
	
	\item[(b)] The dispatcher stores the number of tasks at each server pool using a suitable data structure, such as a sorted list or a heap; the latter being more efficient. When a task is dispatched, the dispatcher increases the corresponding entry of the data structure by one, and when a task departs, the server pool that finished the task sends a message to the dispatcher so that the dispatcher may decrease the corresponding entry of the data structure by one. In both cases the list is resorted, or the heap is rearranged, after the update.
\end{enumerate}

The first implementation does not require to store any state information at the dispatcher, but before being assigned to a server pool, every task must wait until the dispatcher has polled the $n$ server pools and has sorted all their responses. This introduces a prohibitive delay in large systems. On the other hand, the second implementation requires to store complete state information about the number of tasks at each server pool, and to maintain this information suitably arranged in a data structure. The complexity of the operations required to update the data structure after each arrival and departure scales with the size of the system. If an update takes longer than the time elapsed until the next task arrives, then this task can be delayed and may even have to be discarded, or sent to a random server pool, to avoid a queue in front of the dispatcher.

The threshold-based dispatching rule admits a token-based implementation that uses two types of tokens: the dispatcher stores a \emph{green} token for each server pool with less than $\ell_n$ tasks and a \emph{yellow} token for each server pool with less than $h_n$ tasks; both tokens are stored if a server pool has less than $\ell_n$ tasks. When a task arrives, the dispatcher selects a token following the dispatching rule and assigns the task to the corresponding server pool; the token is then discarded. On the other hand, each server pool sends an appropriate token to the dispatcher when its occupancy crosses one of the thresholds due to a departure, or when it receives a task and its number of tasks does not cross a threshold, to replace the token discarded by the dispatcher.

The amount of storage capacity needed for the tokens is much smaller than that required by JSQ with implementation (b). Further, the complexities of picking, discarding and adding tokens are low and do not scale with the size of the system. The token-based implementation of the threshold-based dispatching rule and the associated advantages are reminiscent of those offered by JIQ in the single-server setting. As noted in \cite{lu2011join}, another advantage of JIQ over JSQ is that it can be implemented in systems with multiple dispatchers. The token-based implementation of the threshold-based dispatching rule makes it also suitable for systems with multiple dispatchers, but the present paper focuses on systems with a single dispatcher.

\subsection{Learning schemes}
\label{sub: learning schemes}

The dispatching rule described above relies on knowledge of the offered load for selecting the optimal threshold value, the one that yields a balanced distribution of the load as $n$ grows large. In practice, the offered load is typically unknown and may experience changes over time due to demand fluctuations, which introduces the challenge of keeping the threshold aligned with an unknown and possibly time-dependent optimal value.

\subsubsection{Basic learning scheme}

A basic online procedure for learning the optimal threshold value relies on keeping track of the total number of tasks $N_n$ and estimating the offered load as this quantity divided by $n$. The optimal threshold value is then estimated as $\floor{N_n(t) / n}$ for all $t$. If the arrival rate of tasks is constant over time, then the total number of tasks in stationarity is Poisson distributed with mean $n\rho$, so the latter procedure is asymptotically effective by a law of large numbers. Moreover, if the arrival rate of tasks is time-varying, then it is reasonable to expect a similar effectiveness along intervals of time where the arrival rate of tasks is roughly constant.

One caveat is that the threshold determined through this scheme could exhibit rapid fluctuations if the arrival rate of tasks has large enough high-frequency oscillations. Threshold updates introduce communication overhead since all the server pools must be informed of the new threshold and the state information stored at the dispatcher must be suitably updated. Hence, it might be desirable to sacrifice load balance, to a certain extent, to gain stability of the threshold. This compromise is attained by increasing $\Delta$ and setting
\begin{equation*}
	\ell_n(t) = \floor{\frac{N_n(t)}{n}}_\Delta \defeq \max \set{k \Delta}{k \Delta \leq \frac{N_n(t)}{n}\ \text{and}\ k \in \Z} \quad \text{for all} \quad t.
\end{equation*}

\begin{remark}
	\label{rem: on communication overhead}
	The dispatcher must inform all the server pools of every threshold update, and each server pool must reply to generate appropriate tokens. However, the associated overhead is alleviated by three important properties of updates.
	\begin{itemize}
		\item The updates of the refined learning scheme are always triggered by arrivals and are processed after the task is dispatched, as described below. Hence, the dispatcher has one interarrival time to process the update. The updates of the basic learning scheme can also be processed only at arrival times and after dispatching the task; this does not modify our proofs.
		
		\item The dispatcher does not need all the tokens to follow the threshold-based dispatching rule. It is enough that one green token is present if some server pool has less than $\ell_n$ tasks, or that one yellow token is present if all the server pools have at least $\ell_n$ tasks and some server pool has less than $h_n$ tasks. Therefore, it is reasonable to expect that the dispatching rule will be correctly executed after a threshold update even if not all the tokens have been processed.
		
		\item We prove that threshold updates do not occur along intervals of time where the offered load is suitably bounded. Moreover, the updates can be made sparser by increasing $\Delta$.
	\end{itemize}
	Threshold updates could still generate issues if the time required to process the update is larger than one interarrival time; e.g., a task could be incorrectly dispatched due to inaccurate state information. However, these issues may only arise at the times of updates.
\end{remark}

\subsubsection{Refined learning scheme}

The dispatching rule described in Section \ref{sub: dispatching rule} aims at maintaining the number of tasks at each server pool between $\ell_n$ and $h_n$. An alternative learning scheme for finding the optimal threshold value consists of adjusting the threshold in the appropriate direction whenever the objective of the dispatching rule is violated. This more refined scheme is parameterized by $\alpha_n \in (0, 1)$ and uses the same state information as the dispatching rule. In this scheme, the time-dependent threshold takes values on the non-negative multiples of $\Delta$ and is adjusted in steps of $\Delta$ units at certain arrival epochs. Specifically, when an arrival occurs, the refined learning scheme acts under the following circumstances.
\begin{itemize}
	\item If the fraction of server pools having at least $\ell_n$ tasks is smaller than or equal to $\alpha_n$ right before the arrival, then the threshold is decreased $\Delta$ units once the new task has been dispatched.
	
	\item If all the server pools have at least $\ell_n$ tasks and the number of server pools with at least $h_n$ tasks is larger than or equal to $n - 1$ right before the arrival, then the threshold is increased $\Delta$ units right after dispatching the task that has just arrived.
\end{itemize}

\begin{remark}
	\label{rem: well positioned threahold}
	If the number of server pools with at least $h_n$ tasks is strictly less than $n$, then the definition of the learning scheme ensures that this property is preserved at all times. As a result, new tasks are always dispatched to server pools with at most $h_n - 1$ tasks. Depending on the initial condition, the number of server pools with at least $h_n$ tasks is strictly smaller than $n$ from the start or after a certain number of arrivals, for sufficiently large $n$. Specifically, suppose that all server pools have at most $B$ tasks at time zero. If $\ell_n(0)$ is such that all server pools have at least $h_n(0)$ tasks, then $\ell_n$ increases with each arrival until the number of server pools with at least $h_n$ tasks is strictly smaller than $n$. Note that $\ell_n(0) < \floor{B}_\Delta \Delta$ since otherwise the number of server pools with at least $h_n(0)$ tasks at time zero would be equal to zero. After $\floor{B}_\Delta$ arrivals, the number of server pools with at least $i$ tasks is strictly below $n$ for all $i \geq B + 1$ and $n > \floor{B}_\Delta$ even if all tasks are sent to server pools with $B$ tasks as a result of uniformly random routing decisions. Also, $h_n \geq B + 1$ if the threshold is increased $\floor{B}_\Delta$ times. Thus, the number of server pools with at least $h_n$ tasks is strictly smaller than $n$ after at most $\floor{B}_\Delta$ arrivals for all $n > \floor{B}_\Delta$.
\end{remark}

\subsection{Traffic scenarios}
\label{sub: traffic scenarios}

The adaptive load balancing policies that result from the combination of the dispatching rule described in Section \ref{sub: dispatching rule} and any of the two learning schemes introduced in Section \ref{sub: learning schemes} are analyzed in the context of two different traffic scenarios. Specifically, we first assume that tasks with exponentially distributed service times arrive as a time-inhomogeneous Poisson process, and we then assume that tasks with Coxian distributed service times arrive as a time-homogenenous Poisson process. In the former scenario, our analysis focuses on the refined learning scheme, although our results easily extend to the basic learning scheme. In the latter scenario, our analysis focuses on the basic learning scheme, but most of our results extend to the refined learning scheme, as discussed in Section \ref{sub: extensions} in further detail.

Note that the class of Coxian distributions is dense in the set of non-negative distributions with respect to the topology of weak convergence. In particular, any service time distribution can be approximated with an arbitrary precision by a Coxian distribution with respect to any metric that is compatible with this topology (e.g., the L\'evy-Prokhorov metric). The denseness of the class of Coxian distributions was proved in \cite{schassberger2013warteschlangen} and can also be established as in \cite[Exercise 3.3.3]{kelly2011reversibility}.

Looking beyond exponential service times and constant arrival rates of tasks is important for applications, where traffic conditions are typically different. For example, the duration of YouTube videos is not exponentially distributed; the empirical distribution of a large set of YouTube videos is provided in \cite{cheng2008statistics}. Similarly, if we consider Netflix shows (excluding movies), then the duration has a nearly discrete distribution, concentrated at certain standard show lengths, such as 20, 40 and 60 minutes. In addition, the demand for streaming services can exhibit significant variations even over short periods of time, as observed in \cite{applegate2015optimal,niu2011understanding,niu2011demand}.

\section{Notation and assumptions}
\label{sec: notation and assumptions}

We consider a finite interval of time $[0, T]$ and we assume that new tasks arrive at rate $n \lambda$ for some bounded function $\map{\lambda}{[0, T]}{[0, \infty)}$; i.e., there exists $C \geq 0$ such that $\lambda(t) \leq C$ for all $t \in [0, T]$. We denote the instantaneous offered load per server pool by $\rho(t) \defeq \lambda(t) / \mu$. All tasks undergo at most $r$ phases before they leave the system, they always enter the system in the first phase and they complete phase $m$ after an exponentially distributed time of mean $1 / \mu_m$. After completing phase $m$, a task moves to the next phase with probability $p_m$ or leaves with probability $1 - p_m$, where $p_r = 0$. Note that the case $r = 1$ corresponds to exponentially distributed service times and that the mean service time of a task is
\begin{equation*}
	\frac{1}{\mu} = \sum_{i = 1}^r (1 - p_i) \left(\prod_{j = 1}^{i - 1} p_j\right) \sum_{k = 1}^i \frac{1}{\mu_k}.
\end{equation*}
For technical reasons, we assume that $p_m \in (0, 1)$ for all $m \neq r$. The subclass of Coxian distributions that satisfy this property is dense in the class of all Coxian distributions with respect to the topology of weak convergence, hence dense in the set of all non-negative distributions.

The pair formed by the threshold and the vector-valued process which describes the number of tasks in each phase at each server pool constitutes a Markov process. Instead of considering each server pool individually, we adopt an aggregate state description, denoting by $s_n(j_1, \dots, j_r)$ the fraction of server pools with exactly $j_m$ tasks in phase $m$. In view of symmetry, the pair formed by the threshold $\ell_n$ and the occupancy process $s_n \defeq \set{s_n(j_1, \dots, j_r)}{j_1, \dots, j_r \in \N}$ also constitutes a Markov process. Sequences of these Markov processes, indexed by $n$, will be constructed on a common probability space $(\Omega, \calF, \prob)$ in Sections \ref{sub: relative compactness exponential case} and \ref{sub: relative compactness coxian case}, for exponentially distributed service times and general Coxian distributed service times, respectively. We write $s_n(\omega, t, j_1, \dots, j_r)$ for the value of $s_n(j_1, \dots, j_r)$ at $(\omega, t) \in \Omega \times [0, T]$, although $\omega$ and $t$ are often omitted.

The fraction of server pools with at least $i$ tasks, in any phase, is denoted by
\begin{equation}
	\label{eq: brief and detail descriptors}
	q_n(i) \defeq \sum_{j_1 + \dots + j_r \geq i} s_n(j_1, \dots, j_r),
\end{equation}
and we let $q_n \defeq \set{q_n(i)}{i \geq 0}$. If service times are exponentially distributed, then all the tasks in the system are identical, in the sense that they have the same residual service time distribution. In this case, $s_n$ can be recovered from $q_n$, and we use the latter process to describe the occupancy state of the system, which can be represented as in the diagram of Figure \ref{fig: state variables}. The latter diagram is also useful when service times are not exponentially distributed, although it only provides partial information about the occupancy state of the system since it ignores the phases of tasks.

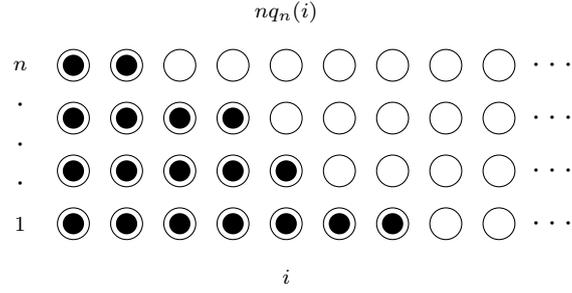
\begin{figure}
	\centering
		\begin{tikzpicture}[x = 0.7cm, y = 0.7cm]
			\foreach \x in {0, 1, 2, ..., 8}
			\foreach \y in {1, 2, ..., 4}
			\draw[black] (\x, \y) circle (0.3);
			
			\node at (4, 0)     {\scriptsize$i$};
			\node at (4, 5)     {\scriptsize$nq_n(i)$};
			\node at (-1, 1)     {\scriptsize$1$};
			\node at (-1, 1.75)  {$\cdot$};
			\node at (-1, 2.5)   {$\cdot$};
			\node at (-1, 3.25)  {$\cdot$};
			\node at (-1, 4)     {\scriptsize$n$};
			
			\foreach \y in {1, 2, ..., 4}
			\fill[black] (0, \y) circle (0.2);
			
			\foreach \y in {1, 2, ..., 4}
			\fill[black] (1, \y) circle (0.2);
			
			\foreach \y in {1, 2, 3}
			\fill[black] (2, \y) circle (0.2);
			
			\foreach \y in {1, 2, 3}
			\fill[black] (3, \y) circle (0.2);
			
			\foreach \y in {1, 2}
			\fill[black] (4, \y) circle (0.2);
			
			\fill[black] (5, 1) circle (0.2);
			
			\fill[black] (6, 1) circle (0.2);
			
			\foreach \y in {1, 2, ..., 4}
			\node at (9, \y)    {$\cdot\cdot\cdot$};
		\end{tikzpicture}
	\caption{Schematic representation of the occupancy state of the system for exponentially distributed service times. White circles represent servers and black circles represent tasks. Each row corresponds to a server pool, and these are arranged so that the number of tasks increases from top to bottom. The number of tasks in column $i$ is $nq_n(i)$.}
	\label{fig: state variables}
\end{figure}

The occupancy process $s_n$ takes values in
\begin{equation*}
	S \defeq \set{s \in [0, 1]^{\N^r}}{\sum_{j_1, \dots, j_r \in \N} s(j_1, \dots, j_r) = 1} \subset \R^{\N^r},
\end{equation*}
while the process $q_n$ takes values in
\begin{equation*}
	Q \defeq \set{q \in [0, 1]^{\N}}{q(i + 1) \leq q(i) \leq q(0) = 1\ \text{for all}\ i \geq 0} \subset \R^\N.
\end{equation*}
Consider the product topology in $\R^{\N^r}$ and assume that there exists $s(0) \in S$ such that
\begin{equation}
	\label{ass: convergence of initial conditions}
	\lim_{n \to \infty} s_n(0) = s(0) \quad \text{almost surely};
\end{equation}
here $s_n(0)$ is the occupancy process evaluated at time zero and $s(0)$ is a constant that describes the limiting occupancy state of the system. In addition, we assume that there exists $B \in \N$ such that $q_n(0, B + 1) = 0$ and $\ell_n(0) \leq B$ for all $n$ with probability one; here we also consider the system at time zero. This technical assumption precludes degenerate sequences of initial conditions, such as initial occupancy states with a (possibly vanishing) fraction of server pools having a number of tasks that approaches infinity with $n$, or initial thresholds which approach infinity with $n$.

Let $G_m$ denote the complementary cumulative distribution function of the residual service time of a task in phase $m$, and consider the function $\map{u}{[0, T]}{[0, \infty)}$ such that
\begin{equation}
	\label{eq: general total number of tasks}
	u^m(0) \defeq \sum_{j_1, \dots, j_r \in \N} j_ms(0, j_1, \dots, j_r), \quad u(t) \defeq \sum_{m = 1}^r u^m(0)G_m(t) + \int_0^t \rho(s) \mu G_1(t - s)ds.
\end{equation}
The quantity $u^m(0)$ may be interpreted as the normalized total number of tasks in phase $m$ for the limiting initial occupancy state $s(0)$. The integral expression corresponds to the functional law of large numbers of an infinite-server system where the arrival rate of tasks scales proportionally to the time-varying function $\lambda$ and the service times have the Coxian distribution specified above. In the case of exponential service times, we may also write
\begin{equation}
	\label{eq: total number of tasks}
	u(0) \defeq \sum_{i = 1}^\infty q(0, i) \quad \text{and} \quad u(t) \defeq u(0)\e^{-\mu t} + \int_0^t \rho(s) \mu \e^{-\mu(t - s)}ds,
\end{equation}
where $q(0)$ is defined in terms of $s(0)$ as in \eqref{eq: brief and detail descriptors}. If $\lambda$ is a regular enough function, then $u$ is differentiable and the integral equation can be stated as $\dot{u}(t) = \lambda(t) - \mu u(t)$, i.e., in differential form.

\section{Main results}
\label{sec: main results}

In this section we present our main results, we discuss several extensions and we outline our methodology. In Section \ref{sub: refined learning scheme} we state our main result for the refined learning scheme, in the time-inhomogeneous exponential scenario, and we provide an overview of our proof technique. In Section \ref{sub: coxian service times} we formulate our main result for Coxian distributed service times, in the time-homogeneous scenario with the basic learning scheme, and we indicate the main steps of the proof. Extensions of our main results are discussed in Section \ref{sub: extensions}.

\subsection{Refined learning scheme}
\label{sub: refined learning scheme}

Assume that $\lambda$ is time-varying, that service requirements are exponentially distributed and that the refined learning scheme is used. We consider the asymptotic behavior of the threshold and the occupancy state of the system over intervals of time where the instantaneous offered load is sufficiently well-behaved, as specified in the next definition.

\begin{definition}
	\label{def: (Delta, m)-bounded}
	Consider an integer $m \geq 0$ and an interval $[a, b] \subset [0, T]$. Suppose that
	\begin{equation*}
		m\Delta < \rhomin \defeq \inf_{t \in [a, b]} \rho(t) \leq \sup_{t \in [a, b]} \rho(t) \eqdef \rhomax < (m + 1) \Delta.
	\end{equation*}
	Moreover, assume that the length of $[a, b]$ is strictly larger than
	\begin{equation*}
		\sigma(a, b, m, \Delta) \defeq \begin{cases}
			\frac{1}{\mu}\log \left(\frac{\rhomin}{\rhomin - m \Delta}\right) + \frac{1}{\mu}\left[\log\left(\frac{u(a) - \rhomax}{(m + 1)\Delta - \rhomax}\right)\right]^+ & \text{if} \quad u(a) > \rhomax, \\
			\frac{1}{\mu}\log\left(\frac{\rhomin}{\rhomin - m \Delta}\right) & \text{if} \quad u(a) \leq \rhomax.
		\end{cases}
	\end{equation*}
	We say that the offered load is $(m, \Delta)$-bounded on $[a, b]$ if the above conditions hold, and we say that the offered load is $\Delta$-bounded on $[a, b]$ if it is $(m, \Delta)$-bounded for some $m$.
\end{definition}

Our main result regarding the refined learning scheme is presented below. Loosely speaking, it states that the threshold reaches an equilibrium during $\Delta$-bounded intervals in all sufficiently large systems with probability one, provided that the control parameter $\alpha_n$ approaches one with $n$ at a suitable rate. Furthermore, the occupancy states are asymptotically nearly balanced under the latter conditions and the degree of balance increases as $\Delta$ decreases. 

\begin{theorem}
	\label{the: main theorem exponential case}
	Suppose that there exists $\gamma_0 \in (0, 1/ 2)$ such that
	\begin{equation}
		\label{ass: conditions on alpha}
		\lim_{n \to \infty} \alpha_n = 1 \quad \text{and} \quad \liminf_{n \to \infty} n^{\gamma_0}(1 - \alpha_n) > 0.
	\end{equation}
	If the offered load is $(m, \Delta)$-bounded on an interval $[a, b]$ and $\sigma(a, b, m, \Delta) < \sigma < b - a$, then there exist $c > 0$ and a set of probability one where the following statements hold:
	\begin{subequations}
		\begin{align}
			&\lim_{n \to \infty} \sup_{t \in [a + \sigma, b]} |\ell_n(t) - m \Delta| = 0, \label{seq: main result 1} \\
			&\lim_{n \to \infty} \sup_{t \in [a + \sigma, b]} n^\gamma\left|1 - q_n(t, i)\right| = 0 \quad \text{for all} \quad i \leq m \Delta \quad \text{and all} \quad \gamma \in [0, 1/2), \label{seq: main result 2} \\
			&\limsup_{n \to \infty} \sup_{t \in [a + \sigma, b]} \sum_{i > (m + 1)\Delta} q_n(t, i)\e^{\mu\left[t - (a + \sigma)\right]} \leq c. \label{seq: main result 3}
		\end{align} 
	\end{subequations}
	The constant $c$ can be taken equal to $u(a + \sigma)$.
\end{theorem}

The conditions in \eqref{ass: conditions on alpha} are crucial for stabilizing the threshold at $m\Delta$. Loosely speaking, the first condition guarantees that the threshold cannot stabilize at a larger value, while the second condition ensures that the threshold cannot decrease after settling at $m\Delta$ as a result of oscillations of the number of tasks around its mean, which are of order $\sqrt{n}$. Since the threshold takes discrete values, \eqref{seq: main result 1} yields $\ell_n(t) = m \Delta$ for all $t \in [a + \sigma, b]$ and all large enough $n$. By \eqref{seq: main result 2} and \eqref{seq: main result 3}, the supremum over $[a + \sigma, b]$ of the fraction of server pools with less than $m\Delta$ tasks is $o\left(1 / n^{\gamma}\right)$ and the fraction of server pools with more than $(m + 1) \Delta$ tasks is upper bounded by $O(1)\e^{-\mu[t - (a + \sigma)]}$ for all $t \in [a + \sigma, b]$. The constant $\sigma(a, b, m, \Delta)$ provides a closed-form upper bound for the asymptotic time until the threshold reaches an equilibrium value. When the system is initially empty and $\rho$ is constant over time, this upper bound is tight and corresponds to the asymptotic time until the scaled total number of tasks reaches $m\Delta$ for the first time.

We prove in Proposition \ref{prop: total number of tasks} that the normalized total number of tasks $N_n / n$ converges uniformly over compact sets to the function $u$ defined in \eqref{eq: total number of tasks} with probability one. The latter equation and the definition of $\Delta$-boundeness yield $m\Delta < u(t) < (m + 1) \Delta$ for all $t \in [a + \sigma, b]$. If $\Delta = 1$, then this implies that all the server pools have either $\floor{N_n(t) / n}$ or $\ceil{N_n(t) / n}$ tasks, with the exception of a fraction that is $o\left(1 / n^{\gamma}\right) +  O(1)\e^{-\mu[t - (a + \sigma)]}$. As $\Delta$ increases, the load becomes more coarsely balanced, but the threshold is more stable against small oscillations of the demand. This allows to avoid some threshold updates, and the associated communication overhead.

\begin{remark}
	\label{rem: on optimality}
	If $\lambda$ is constant over time, then the fluid limit of the normalized total number of tasks satisfies $\dot{u} = \lambda - \mu u$ and hence $u(t) = \rho + \left[u(0) - \rho\right]\e^{-\mu t}$. In particular, $u(t) \to \rho$ as $t \to \infty$ and the load is perfectly balanced in the limit if all server pools have at least $\floor{\rho}$ tasks and no server pool has more than $\ceil{\rho}$ tasks. If $\Delta = 1$, then \eqref{seq: main result 2} states that all server pools have at least $\floor{\rho}$ tasks but \eqref{seq: main result 3} only states that the number of tasks located in column $\ceil{\rho} + 1$ or further to the right in the diagram of Figure \ref{fig: state variables} decays at least exponentially fast; this implies in particular that the fraction of server pools with more than $\ceil{\rho}$ tasks decays at least exponentially fast. This is nonetheless the most balanced distribution of the load that we can expect in a transient regime for general initial conditions. Loosely speaking,
	\begin{equation*}
		\lim_{n \to \infty} \sum_{i = 1}^\infty q_n(t, i) = u(t) = \rho + \left[u(0) - \rho\right]\e^{-\mu t} \quad \text{almost surely},
	\end{equation*} 
	regardless of the load balancing policy. If $u(0) > \ceil{\rho}$, then the number of tasks located in columns $\ceil{\rho} + 1$ or further to the right in the diagram of Figure \ref{fig: state variables} is at least $u(0) - \ceil{\rho} > 0$ at time zero. The fastest rate of decay for the latter number of tasks is achieved when incoming tasks are never sent to server pools with at least $\floor{\rho}$ tasks. In this case we have the same exponential rate of decay over time as for $u$; see Proposition \ref{prop: tail number of tasks}. It follows that no load balancing policy can beat the exponential rate of decay obtained in \eqref{seq: main result 3} for general initial conditions.
\end{remark}

\subsubsection{Numerical experiments}
\label{sub: numerical experiments}

Figure \ref{fig: simulation} illustrates the tradeoff between optimality and stability governed by $\Delta$. The experiments correspond to two systems operating under the same conditions, except for the choice of $\Delta$. The time-varying offered load, the number of server pools and $\alpha_n$ are the same in both systems, but while $\Delta = 1$ in one of the systems, $\Delta = 3$ in the other.

The offered load and threshold processes of the systems are plotted in Figure \ref{fig: simulation a}. On a slower time scale, we observe that the offered load switches between two modes: the first one corresponds to a period of larger demand, during the interval $[3, 12]$, while the second one corresponds to a period of smaller demand, during $[14, 23]$. On a faster time scale, the offered load exhibits rapid oscillations, which have larger amplitude along $[3, 12]$ and become smaller during $[14, 23]$. The threshold of the system with $\Delta = 3$ is constant along both intervals, as one would expect from \eqref{seq: main result 1} since both are $3$-bounded. In contrast, the threshold of the system with $\Delta = 1$ is constant during the interval $[14, 23]$ but exhibits oscillations along $[3, 12]$, which is not $1$-bounded.

The plots on the bottom of Figure \ref{fig: simulation} depict the occupancy state of the systems along intervals of time where the threshold remains constant. Figure \ref{fig: simulation b} shows the occupancy state of the system with $\Delta = 3$ along the interval $[3, 12]$. In consonance with \eqref{seq: main result 2} and \eqref{seq: main result 3}, the fraction of server pools with fewer than $\ell_n^{(3)}$ tasks or more than $\ell_n^{(3)} + 3$ tasks is nearly zero. However, the load is not perfectly balanced since the fraction of server pools with exactly $\ell_n^{(3)} + i$ tasks remains positive for all $0 \leq i \leq 3$. In contrast, the load is perfectly balanced in the system with $\Delta = 1$ along $[14, 23]$, as Figure \ref{fig: simulation c} shows; i.e., at all times virtually all server pools have either $\ell_n^{(1)}$ or $\ell_n^{(1)} + 1$ tasks.

As explained above, the system with $\Delta = 1$ manages to distribute the load evenly among the server pools during the interval $[14, 23]$, where the offered load only exhibits small oscillations. However, the larger demand fluctuations during the interval $[3, 12]$ result in an unstable threshold, which exhibits frequent oscillations along this interval. In contrast, the system with $\Delta = 3$ holds the threshold fixed along the intervals $[3, 12]$ and $[14, 23]$, although this comes at the expense of the load being only coarsely balanced along each of these intervals.

\begin{figure}
	\centering
	\begin{subfigure}{\columnwidth}
		\centering
		\includegraphics{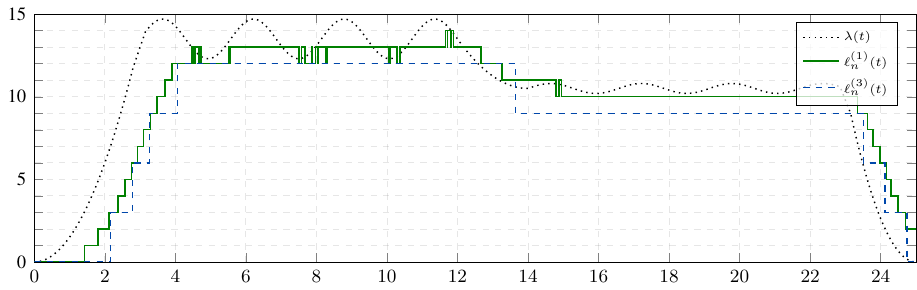}
		\caption{}
		\label{fig: simulation a}
	\end{subfigure}
	\vfill
	\begin{subfigure}{0.49\columnwidth}
		\centering
		\includegraphics{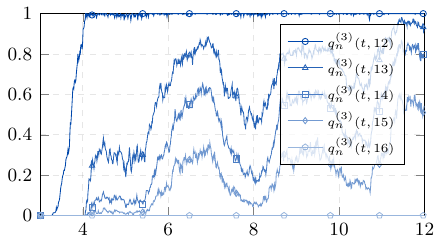}
		\caption{}
		\label{fig: simulation b}
	\end{subfigure}
	\hfill
	\begin{subfigure}{0.49\columnwidth}
		\centering
		\includegraphics{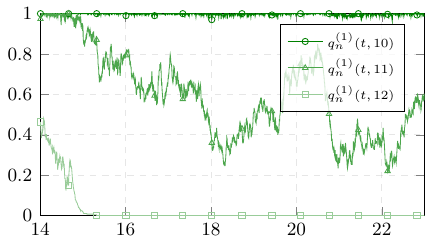}
		\caption{}
		\label{fig: simulation c}
	\end{subfigure}
	\caption{Numerical experiments with two different choices of $\Delta$. The evolution of the thresholds is plotted in (a), the occupancy state of the system with $\Delta = 3$ along the $3$-bounded interval $[3, 12]$ is depicted in (b) and the occupancy state of the system with $\Delta = 1$ along the $1$-bounded interval $[14, 23]$ is plotted in (c). Both experiments correspond to initially empty systems with $\mu = 1$, $n = 300$, $\alpha_n = 1 - 1 / n^{0.48}$ and the arrival rate function plotted in (a); the superscripts in the legends indicate the value of $\Delta$.}
	\label{fig: simulation}
\end{figure}

\subsubsection{Outline of the proof}
\label{sub: outline of the proof}

We now present an outline of the proof of Theorem \ref{the: main theorem exponential case}, which is illustrated in Figure \ref{fig: proof diagram}. This theorem concerns a sequence of occupancy and threshold processes constructed on a common probability space in Section \ref{sub: relative compactness exponential case}, where it is also established that the occupancy processes have relatively compact sample paths with probability one. This property is proven in Proposition \ref{prop: relative compactness of occupancy state sample paths} using a technique developed in \cite{bramson1998state}, which relies on strong approximations; specifically, on strong law of large numbers for the Poisson process.

A slight refinement of the latter law of large numbers is derived in Section \ref{sub: asymptotic dynamical properties exponential case}, where we define a set of sample paths for which this refined strong law of large numbers, and a few additional technical properties, hold. Propositions \ref{prop: total number of tasks} and \ref{prop: tail number of tasks} prove that certain dynamical properties hold asymptotically within this set of sample paths, which has probability one. These asymptotic dynamical properties concern the total and tail mass processes:
\begin{equation}
	\label{eq: total and tail mass processes}
	u_n \defeq \sum_{i = 1}^\infty q_n(i) = \frac{N_n}{n} \quad \text{and} \quad v_n(j) \defeq \sum_{i = j}^\infty q_n(i) \quad \text{for all} \quad j \geq 1,
\end{equation}
respectively. The former represents the total number of tasks in the system, normalized by the number of server pools. The latter corresponds to the total number of tasks in all but the first $j - 1$ columns of the diagram depicted in Figure \ref{fig: state variables}, also normalized by the number of server pools. Proposition \ref{prop: total number of tasks} establishes that $u_n$ converges uniformly over $[0, T]$ to the function $u$ defined by~\eqref{eq: total number of tasks} with probability one, which is the strong law of large numbers of an infinite-server system. Proposition \ref{prop: tail number of tasks} is conceived to be used in conjunction with Proposition \ref{prop: relative compactness of occupancy state sample paths} since it assumes that a convergent subsequence of occupancy process sample paths is given. This proposition provides an asymptotic upper bound for a certain tail mass process under suitable conditions on the threshold and occupancy processes. Essentially, the conditions ensure that server pools with at least a certain number $j$ of tasks are not assigned any further tasks, and the asymptotic upper bound implies that $v_n(j + 1)$ decays at least exponentially fast in a many-server regime.

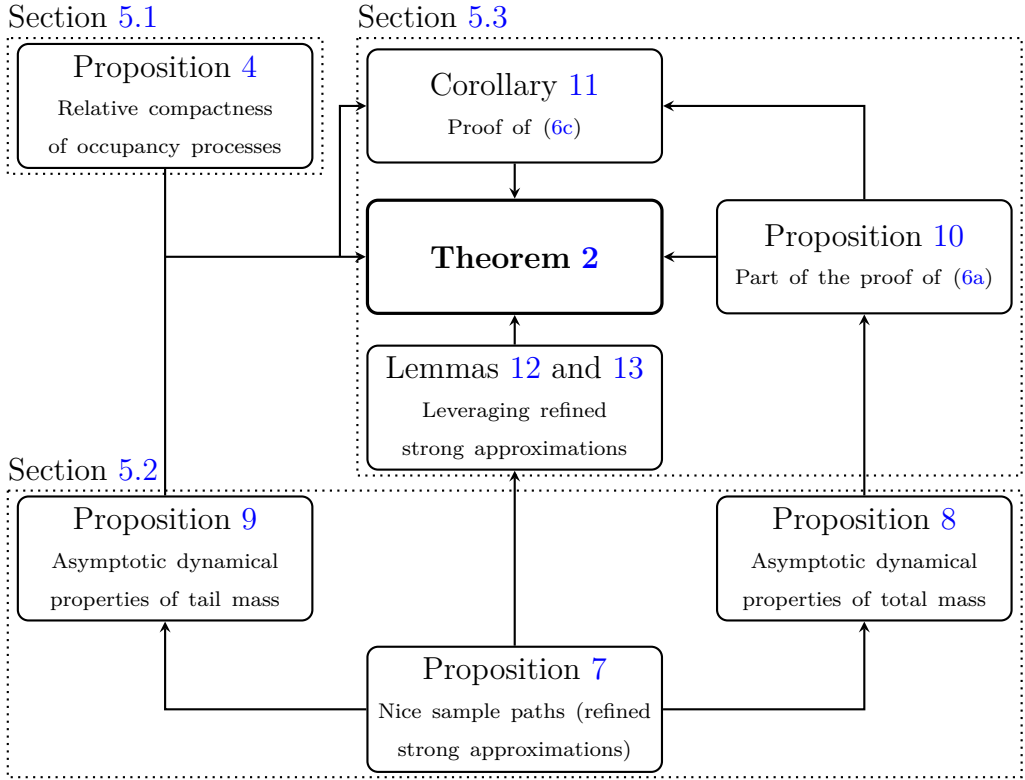
\begin{figure}
	\centering
	\begin{tikzpicture}[x = 5cm, y = 2.3cm]
		\path (0, 0)			node[result]	(theo2)		{Proposition \ref{prop: relative compactness of occupancy state sample paths}\\ \scriptsize{Relative compactness of occupancy processes}}
		+(1, 0)			node[result]	(coro1)		{Corollary \ref{cor: decay of the tail}\\ \scriptsize{Proof of \eqref{seq: main result 3}}}
		+(1,-1)			node[main]		(theo1)		{\textbf{Theorem \ref{the: main theorem exponential case}}}
		+(2,-1)			node[result]	(prop6)		{Proposition \ref{prop: bounded threshold}\\ \scriptsize{Part of the proof of \eqref{seq: main result 1}}}
		+(1,-2)			node[result]	(lemm6)		{Lemmas \ref{lem: lemma 1 for main result} and \ref{lem: lemma 2 for main result}\\ \scriptsize{Leveraging refined strong approximations}}
		+(0,-3)			node[result]	(prop5)		{Proposition \ref{prop: tail number of tasks}\\ \scriptsize{Asymptotic dynamical properties of tail mass}}
		+(2,-3)			node[result]	(prop4)		{Proposition \ref{prop: total number of tasks}\\ \scriptsize{Asymptotic dynamical properties of total mass}}
		+(1,-4)			node[result]	(prop3)		{Proposition \ref{prop: definition of Omega_T}\\ \scriptsize{Nice sample paths (refined strong approximations)}}
		+(-0.48,0.45)	node[anchor = south west]	{Section \ref{sub: relative compactness exponential case}}
		+(0.52,0.45)	node[anchor = south west]	{Section \ref{sub: evolution of the threshold}}
		+(-0.48,-2.55)	node[anchor = south west]	{Section \ref{sub: asymptotic dynamical properties exponential case}};
		
		\draw[thick]	(theo2)		--		(prop5);
		\draw[arrow]	(0,-1)		--		(theo1);
		\draw[arrow]	(0.5,-1)	|-		(coro1);
		\draw[arrow]	(coro1)		--		(theo1);
		\draw[arrow]	(prop6)		|-		(coro1);
		\draw[arrow]	(lemm6)		--		(theo1);
		\draw[arrow]	(prop3)		-|		(prop5);
		\draw[arrow]	(prop3)		--		(lemm6);
		\draw[arrow]	(prop4)		--		(prop6);
		\draw[arrow]	(prop3)		-|		(prop4);
		\draw[arrow]	(prop6)		--		(theo1);
		
		\draw[dotted, thick]	(-0.45,0.45)	rectangle	(0.45,-0.45);
		\draw[dotted, thick]	(0.55, 0.45)	rectangle	(2.45,-2.45);
		\draw[dotted, thick]	(-0.45, -2.55)	rectangle	(2.45,-4.45);
		
	\end{tikzpicture}
	\caption{Schematic representation of the relations between the various results used to prove Theorem~\ref{the: main theorem exponential case}; some intermediate results are omitted. An arrow connecting two results means that the first result is used to prove the second. The dotted boxes indicate the sections where the results are proven.}
	\label{fig: proof diagram}
\end{figure}

The proof of Theorem \ref{the: main theorem exponential case} is completed in Section \ref{sub: evolution of the threshold} through several steps. First Proposition~\ref{prop: total number of tasks} is used to prove part of \eqref{seq: main result 1}. Specifically, it is established in Proposition \ref{prop: bounded threshold} that $\ell_n$ is almost surely upper bounded by $m \Delta$ along $[a, b]$ minus a right neighborhood of $a$, for all large enough~$n$. Next Propositions \ref{prop: relative compactness of occupancy state sample paths}, \ref{prop: tail number of tasks} and \ref{prop: bounded threshold} are used to establish \eqref{seq: main result 3}. As mentioned above, Propositions \ref{prop: relative compactness of occupancy state sample paths} and \ref{prop: tail number of tasks} are used in conjunction. Loosely speaking, under the assumption that \eqref{seq: main result 3} does not hold, it is possible to construct a convergent subsequence of occupancy processes violating \eqref{seq: main result 3}, but Proposition \ref{prop: tail number of tasks} leads to a contradiction. Finally, Lemmas \ref{lem: lemma 1 for main result} and \ref{lem: lemma 2 for main result} leverage the refined strong approximations derived in Section \ref{sub: asymptotic dynamical properties exponential case} to analyze the occupancy processes on the scale of $n^{\gamma_0}$. This analysis and the asymptotic dynamical properties derived in Section \ref{sub: asymptotic dynamical properties exponential case} are then used to finish the proof of \eqref{seq: main result 1} and to also prove \eqref{seq: main result 2}, completing the proof of Theorem \ref{the: main theorem exponential case}.

\subsection{Coxian service times}
\label{sub: coxian service times}

Assume that $\lambda$ is constant over time, that service times are Coxian distributed and that the basic learning scheme is used. Our main result in this setting is that the threshold eventually reaches an equilibrium value in a many-server regime with probability one, and that the load becomes nearly balanced after the threshold settles.

\begin{theorem}
	\label{the: main theorem coxian case}
	Suppose that $\rho \notin \set{i\Delta}{i \in \N}$. Then there exist constants $\sigma, \eta, c > 0$ and a set of probability one where the following three statements hold:
	\begin{subequations}
		\begin{align}
			&\lim_{n \to \infty} \sup_{t \in [\sigma, T]} |\ell_n(t) - \floor{\rho}_\Delta| = 0, \label{seq: main result coxian 1} \\
			&\lim_{n \to \infty} \sup_{t \in [\sigma, T]} \left|1 - q_n(t, i)\right| = 0 \quad \text{for all} \quad i \leq \floor{\rho}_\Delta, \label{seq: main result coxian 2} \\
			&\limsup_{n \to \infty} \sup_{t \in [\sigma, T]} \sum_{i > \floor{\rho}_\Delta + \Delta} q_n(t, i)\e^{\eta\left(t - \sigma\right)} \leq c. \label{seq: main result coxian 3}
		\end{align} 
	\end{subequations}
	The constants $\sigma$, $\eta$ and $c$ do not depend on $T$.
\end{theorem}

This result has similar consequences as Theorem \ref{the: main theorem exponential case}. Namely, \eqref{seq: main result coxian 1} implies that $\ell_n(t) = \floor{\rho}_\Delta$ for all $t \in [\sigma, T]$ and all large enough $n$. Also, \eqref{seq: main result coxian 2} and \eqref{seq: main result coxian 3} imply that the total number of tasks in the system is asymptotically nearly balanced across the server pools during the interval $[\sigma, T]$. Furthermore, the load is evenly balanced if $\Delta = 1$; i.e., the fraction of server pools with less than $\floor{\rho}$ tasks vanishes with $n$ and the fraction of server pools with more than $\ceil{\rho}$ tasks is upper bounded by $O(1)\e^{-\eta(t - \sigma)}$. The technical condition $\rho \notin \set{i\Delta}{i \in \N}$ ensures that the threshold does not oscillate between $\rho - \Delta$ and $\rho$ when the offered load is integral and a multiple of $\Delta$. In addition, the constant $\eta$ is a lower bound for the hazard rate function of the service time distribution, and is equal to $\mu$ for exponentially distributed service requirements.

The proof of Theorem \ref{the: main theorem coxian case} is based on the methodology outlined in Section \ref{sub: outline of the proof}. Specifically, we first construct a sequence of occupancy and threshold processes on a common probability space from a set of stochastic primitives, and we establish that the sequence of occupancy processes is relatively compact with probability one; this is done in Section \ref{sub: relative compactness coxian case}. Then we identify certain dynamical properties of the system and prove that they hold asymptotically in Section \ref{sub: asymptotic dynamical properties coxian case}; these properties concern aggregate quantities analogous to the total and tail mass processes introduced in \eqref{eq: total and tail mass processes}. Finally, the latter properties are used in Section \ref{sub: proof of the main result coxian case} to prove Theorem \ref{the: main theorem coxian case}.

\subsection{Extensions}
\label{sub: extensions}

Our proof of Theorem \ref{the: main theorem exponential case} can be easily adapted to prove an analogous result for the basic learning scheme. Specifically, assume that the hypotheses of Theorem \ref{the: main theorem exponential case} hold, except that the basic learning scheme is used instead of the refined learning scheme, and let
\begin{equation*}
	\tilde{\sigma}_\eq(a, b, m, \Delta) \defeq
	\begin{cases}
		\frac{1}{\mu}\left[\log \left(\frac{\rhomin - u(a)}{\rhomin - m \Delta}\right)\right]^+ & \text{if} \quad u(a) < \rhomin, \\
		0 & \text{if} \quad \rhomin \leq u(a) \leq \rhomax, \\
		\frac{1}{\mu}\left[\log \left(\frac{u(a) - \rhomax}{(m + 1) \Delta - \rhomax}\right)\right]^+ & \text{if} \quad u(a) > \rhomax.
	\end{cases}
\end{equation*}
It follows from \eqref{eq: total number of tasks} that $m\Delta < u(t) < (m + 1)\Delta$ for all $a + \tilde{\sigma}_\eq(a, b, m, \Delta) < t \leq b$. The strong law of large numbers for the total number of tasks provided in Proposition \ref{prop: total number of tasks} implies that $u_n(t)$ converges uniformly over $[0, T]$ to $u(t)$ with probability one. Hence, \eqref{seq: main result 1} holds for all $\sigma > \tilde{\sigma}_\eq(a, b, m, \Delta)$ when the basic learning scheme is used. Corollary \ref{cor: decay of the tail} applies to the basic learning scheme with exactly the same proof; the only difference is that the quantity $\sigma_{\bd}(a, b, m, \Delta)$ in the statement must be replaced by $\tilde{\sigma}_{\eq}(a, b, m, \Delta)$. In particular, \eqref{seq: main result 3} also holds for the basic learning scheme if $\sigma > \tilde{\sigma}_\eq(a, b, m, \Delta)$. Finally, we prove that \eqref{seq: main result 2} holds for the basic learning scheme and all
\begin{equation*}
	\sigma > \tilde{\sigma}(a, b, m, \Delta) \defeq \tilde{\sigma}_\eq(a, b, m, \Delta) + \frac{1}{\mu}\log\left(\frac{\rhomin}{\rhomin - m \Delta}\right)
\end{equation*}
by arguing as in the proof of Theorem \ref{the: main theorem exponential case}. The only difference with the proof for the refined learning scheme is that all instances of $\sigma_{\bd}(a, b, m, \Delta)$ must be replaced by $\tilde{\sigma}_{\eq}(a, b, m, \Delta)$.

It is also possible to prove that \eqref{seq: main result coxian 2} and \eqref{seq: main result coxian 3} hold if the hypotheses of Theorem \ref{the: main theorem coxian case} hold, except that the basic learning scheme is replaced by the refined learning scheme. Indeed, as in the proof of Proposition \ref{prop: bounded threshold}, it can be shown that there exists $\sigma > 0$ such that $\ell_n(t) \leq \floor{\rho}_\Delta$ for all $t \geq \sigma$ and all large enough $n$ with probability one. Then \eqref{seq: main result coxian 2} and \eqref{seq: main result coxian 3} can be established as in the proof of Theorem \ref{the: main theorem coxian case}. Nevertheless, proving that the threshold eventually reaches an equilibrium in all sufficiently large systems would require non-trivial additional arguments. In particular, the arguments used in Section \ref{sub: evolution of the threshold} to establish this property in the case of exponentially distributed service times cannot be easily extended to a scenario with general Coxian service times.

As a final remark, we observe that the statements in Theorem \ref{the: main theorem exponential case} are only meaningful because the constant $\sigma(a, b, m, \Delta)$ can be evaluated and is reasonably small in many situations. The derivation of this constant heavily relies on the simplicity of \eqref{eq: total number of tasks} for exponential service times, whereas the more complex equation \eqref{eq: general total number of tasks} holds for general Coxian service times. Deriving a suitable analog of $\sigma(a, b, m, \Delta)$ would be challenging in a time-inhomogeneous scenario with Coxian distributed service times, even for the basic learning scheme. Furthermore, a closed-form expression as the one provided in Definition \ref{def: (Delta, m)-bounded} is not likely to be available in the latter setting.

\section{Refined learning scheme}
\label{sec: refined learning scheme}

In this section we prove Theorem \ref{the: main theorem exponential case}. For this purpose, we assume that the refined learning scheme is used, that $\lambda$ is time-varying and that service times are exponentially distributed. We proceed as indicated in Section \ref{sub: outline of the proof}.

\subsection{Relative compactness of occupancy processes}
\label{sub: relative compactness exponential case}

In Section \ref{subsub: coupled construction of sample paths exponential case} we construct the sample paths of the occupancy states $q_n$ and the thresholds $\ell_n$ on a common probability space for all $n$. The sample paths of the occupancy states lie in the space $D_{\R^\N}[0, T]$ of all c\`adl\`ag functions defined on $[0, T]$ with values in $\R^\N$, which we endow with the topology of uniform convergence. Specifically, consider the metric $d$ defined by
\begin{equation*}
	d(x, y) \defeq \sum_{i = 0}^\infty \frac{\min\left\{|x(i) - y(i)|, 1\right\}}{2^i} \quad \text{for all} \quad x, y \in \R^\N,
\end{equation*}
which generates the product topology on $\R^\N$. The topology of uniform convergence equipped to the space $D_{\R^\N}[0, T]$ is given by the following metric:
\begin{equation*}
	\varrho(x, y) \defeq \sup_{t \in [0, T]} d\left(x(t), y(t)\right) \quad \text{for all} \quad x, y \in D_{\R^\N}[0, T].
\end{equation*}
In Section \ref{subsub: relative compactness exponential case} we use a methodology developed in \cite{bramson1998state} to establish that $\set{q_n}{n \geq 1}$ is a relatively compact subset of $D_{\R^\N}[0, T]$ with probability one.

\subsubsection{Coupled construction of sample paths}
\label{subsub: coupled construction of sample paths exponential case}

The sample paths of the occupancy states and thresholds are defined as deterministic functions of the following stochastic primitives.

\begin{itemize}
	\item \textit{Driving Poisson processes:} a family $\set{\calN_i}{i \geq 0}$ of independent Poisson processes of unit rate, defined on a common probability space $(\Omega_D, \calF_D, \prob_D)$.
	
	\item \textit{Selection variables:} a sequence $\set{U_j}{j \geq 1}$ of independent and identically distributed uniform random variables with values on $[0, 1)$,  defined on a common probability space $(\Omega_S, \calF_S, \prob_S)$.
	
	\item \textit{Initial conditions:} families $\set{q_n(0)}{n \geq 1}$ and $\set{\ell_n(0)}{n \geq 1}$ of random variables describing the initial conditions of the systems, defined on the common probability space $(\Omega_I, \calF_I, \prob_I)$ and satisfying the assumptions introduced in Section \ref{sec: notation and assumptions}.
\end{itemize}

Consider the product probability space of $(\Omega_D, \calF_D, \prob_D)$, $(\Omega_S, \calF_S, \prob_S)$ and $(\Omega_I, \calF_I, \prob_I)$; denote its completion by $(\Omega, \calF, \prob)$. The occupancy states and thresholds are defined on the latter space from a set of equations involving the fundamental processes and random variables introduced above. In order to write these equations, it is convenient to introduce some notation.

For each occupancy state $q \in Q$, we define the intervals
\begin{equation*}
	I_i(q) \defeq \left[1 - q(i - 1), 1 - q(i)\right) \quad \text{for all} \quad i \geq 1.
\end{equation*}
These intervals form a partition of $[0, 1)$ such that the length of $I_i(q)$ is the fraction of server pools with precisely $i - 1$ tasks. If $q(j) < 1$ for some $j \in \N$, then we also define
\begin{equation*}
	J_i(q, j) \defeq \left[\frac{1 - q(i - 1)}{1 - q(j)}, \frac{1 - q(i)}{1 - q(j)}\right) \quad \text{for all} \quad 1 \leq i \leq j.
\end{equation*}
These intervals yield another partition of $[0, 1)$. In this case, the length of $J_i(q, j)$ is the fraction of server pools having exactly $i - 1$ tasks, but only among those server pools with at most $j - 1$ tasks; we define $J_i(q, j) \defeq \emptyset$ for all $1 \leq i \leq j$ when $q(j) = 1$.

If $\ell \in \N$ represents a threshold and $q(\ell) < 1$, then the length of $J_i(q, \ell)$ is equal to the probability of picking a server pool with precisely $i - 1$ tasks uniformly at random among those with strictly less than $\ell$ tasks. Similarly, if $h \defeq \ell + \Delta$, $q(\ell) = 1$ and  $q(h) < 1$, then the length of $J_i(q, h)$ is equal to the probability of picking a server pool with exactly $i - 1$ tasks uniformly at random among those with at least $\ell$ tasks and strictly less than $h$ tasks. We define
\begin{equation*}
	r_{ij}(q, \ell) \defeq \begin{cases}
		\ind{U_j \in J_i(q, \ell)} & \text{if} \quad i - 1 < \ell, \\
		\ind{q(\ell) = 1, U_j \in J_i(q, h)} & \text{if} \quad \ell \leq i - 1 < h, \\
		\ind{q(h) = 1, U_j \in I_i(q)} & \text{if} \quad i - 1 \geq h,
	\end{cases}
	\quad \text{for all} \quad i,j \geq 1.
\end{equation*}
Note that $r_{ij}(q, \ell) \in \{0, 1\}$ and that for a fixed $j$ there exists a unique $i$ such that $r_{ij}(q, \ell) = 1$. This family of random variables will be used to describe the dispatching decisions in the systems. Specifically, if $q$ and $\ell$ are the occupancy state and the threshold, respectively, when the $j^{\text{th}}$ task arrives to the system, then this task is sent to a server pool with exactly $i - 1$ tasks if and only if $r_{ij}(q, \ell) = 1$; this coincides with the dispatching rule described in Section \ref{sub: dispatching rule}.

We postulate that
\begin{equation*}
	\calN_n^\lambda(t) \defeq \calN_0\left(n\int_0^t \lambda(s)ds\right)
\end{equation*}
is the number of tasks that arrive to the system with $n$ server pools during the interval $[0, t]$, we denote the jump times of $\calN_n^\lambda$ by $\set{\tau_{n,k}}{k \geq 1}$ and we let $\tau_{n,0} \defeq 0$. Note that $\calN_n^\lambda$ is a Poisson process with time-varying intensity $n\lambda$, as required by our model.

For each $n$ and each pair of functions $\map{q}{[0, T]}{Q}$ and $\map{\ell}{[0, T]}{\N}$, we introduce two infinite-dimensional counting processes, for arrivals and departures, denoted $\calA_n(q, \ell)$ and $\calD_n(q)$, respectively. The $i = 0$ coordinates of these two counting processes are identically zero, whereas the other coordinates are defined as follows:
\begin{equation*}
	\begin{split}
		&\calA_n(q, \ell, t, i) \defeq \frac{1}{n} \sum_{j = 1}^{\calN_n^\lambda(t)} r_{ij}\left(q\left(\tau_{n, j}^-\right), \ell\left(\tau_{n, j}^-\right)\right), \\
		&\calD_n(q, t, i) \defeq \frac{1}{n} \calN_i\left(n\int_0^t \mu i \left[q(s, i) - q(s, i + 1)\right]ds\right),
	\end{split}
	\quad \text{for all} \quad i \geq 1.
\end{equation*}

Consider the following set of implicit equations:
\begin{align*}
	&q(t) = q_n(\omega, 0) + \calA_n(q, \ell, \omega, t) - \calD_n(q, \omega, t), \\
	&\ell(t) = \ell_n(\omega, 0) + \sum_{k = 1}^{\calN_n^\lambda(\omega, t)} \Delta\left[\indc{A_{n, k}}(\omega) - \indc{B_{n, k}}(\omega)\right], \\
	&A_{n, k} = \set{\omega \in \Omega}{q\left(\tau_{n, k}^-(\omega), \ell\left(\tau_{n, k}^-(\omega)\right)\right) = 1\ \text{and}\ nq\left(\tau_{n, k}^-(\omega), \ell\left(\tau_{n, k}^-(\omega)\right) + \Delta\right) \geq n - 1}, \\
	&B_{n, k} = \set{\omega \in \Omega}{q\left(\tau_{n, k}^-(\omega), \ell\left(\tau_{n, k}^-(\omega)\right)\right) \leq \alpha_n}.
\end{align*}
Recall that $q_n(0, B + 1) = 0$ for all $n$ almost surely, as indicated in Section \ref{sec: notation and assumptions}. In particular, the initial number of tasks in the system is finite with probability one. Using this property, and proceeding by forward induction on the jumps of the driving Poisson processes, it is possible to prove that there exists a set of probability one $\Gamma_0$ with the next property. For each $\omega \in \Gamma_0$ and each $n$, there exist unique c\`adl\`ag solutions $\map{q_n(\omega)}{[0, T]}{Q}$ and $\map{\ell_n(\omega)}{[0, T]}{\N}$ to the above equations.

The sample paths of the occupancy states and the thresholds are defined by extending these solutions to $\Omega$, setting $q_n(\omega) \equiv 0$ and $\ell_n(\omega) \equiv 0$ for all $\omega \notin \Gamma_0$. In addition, we define
\begin{equation*}
	\calA_n \defeq \calA_n(q_n, \ell_n), \quad \calD_n \defeq \calD_n(q_n) \quad \text{and} \quad h_n \defeq \ell_n + \Delta,
\end{equation*}
and we rewrite the implicit equations as follows:
\begin{subequations}
	\begin{align}
		&q_n(t) = q_n(0) + \calA_n(t) - \calD_n(t), \label{seq: occupancy state}\\
		&\ell_n(t) = \ell_n(0) + \sum_{k = 1}^{\calN_n^\lambda(t)} \Delta \left[\ind{q_n\left(\tau_{n, k}^-, \ell_n\left(\tau_{n, k}^-\right)\right) = 1,\ nq_n\left(\tau_{n, k}^-, h_n\left(\tau_{n, k}^-\right)\right) \geq n - 1} - \ind{q_n\left(\tau_{n, k}^-, \ell_n\left(\tau_{n, k}^-\right)\right) \leq \alpha_n}\right]. \label{seq: threshold}
	\end{align}
\end{subequations}

The above construction endows the processes $q_n$ and $\ell_n$ with the intended statistical behavior. Indeed, the time-inhomogeneous Poisson process $\calN_n^\lambda$ has instantaneous rate $n\lambda(t)$ for each $n$, and the random functions $r_{ij}$ apply the dispatching rule described in Section \ref{sub: dispatching rule} when the occupancy state and threshold are passed as arguments. In particular, $\calA_n(i)$ models the arrivals to server pools with precisely $i - 1$ tasks in a system with $n$ server pools. In addition, the instantaneous intensity of $\calD_n(i)$ is given by $n\mu i[q_n(t, i) - q_{n}(t, i + 1)]$, which is equal to the total number of tasks in server pools with exactly $i$ tasks times the rate at which tasks are executed. Therefore, $\calD_n(i)$ models the departures from server pools with precisely $i$ tasks, and thus \eqref{seq: occupancy state} corresponds to the evolution of the occupancy state in a system with $n$ server pools. Furthermore, the refined learning scheme, for adjusting the threshold, is captured by \eqref{seq: threshold}.

\subsubsection{Relative compactness of sample paths}
\label{subsub: relative compactness exponential case}

Below we state the relative compactness result mentioned at the start of Section \ref{sub: relative compactness exponential case}, deferring the proof to Section \ref{app: relative compactness} of the appendix.

\begin{proposition}
	\label{prop: relative compactness of occupancy state sample paths}
	There exists a set of probability one $\Gamma_T \subset \Gamma_0$ with the following property. The sequences $\set{\calA_n(\omega)}{n \geq 1}$, $\set{\calD_n(\omega)}{n \geq 1}$ and $\set{q_n(\omega)}{n \geq 1}$ are relatively compact for each $\omega \in \Gamma_T$ and have the property that the limit of every convergent subsequence is a function with Lipschitz coordinates, also for each $\omega \in \Gamma_T$. 
\end{proposition}

Essentially, the proof of the above proposition relies on the decomposition \eqref{seq: occupancy state} and the fact that the coordinates of $\calA_n$ and $\calD_n$ have intensities which are $O(n)$. In particular, observe that the proof provided in Section \ref{app: relative compactness} does not depend on the learning scheme used for adjusting the threshold over time, or even on the rule used to take the dispatching decisions.

\subsection{Asymptotic dynamical properties}
\label{sub: asymptotic dynamical properties exponential case}

In this section we prove asymptotic dynamical properties concerning the total and tail mass processes. These were defined in \eqref{eq: total and tail mass processes} as
\begin{equation*}
	u_n \defeq \sum_{i = 1}^\infty q_n(i) \quad \text{and} \quad v_n(j) \defeq \sum_{i = j}^\infty q_n(i) \quad \text{for all} \quad j \geq 1,
\end{equation*}
respectively. The total mass process $u_n$ corresponds to the total number of tasks in the system, normalized by the number of server pools, as the diagram of Figure \ref{fig: state variables} suggests. The tail mass process $v_n(j)$ represents the total number of tasks located in column $j$ of the latter diagram, or further to the right, also normalized by the number of server pools. Section \ref{subsub: set of nice sample paths} contains some preliminary results, and the asymptotic dynamical properties are established in Section \ref{subsub: properties of total and tail mass functions}.

\subsubsection{Set of nice sample paths}
\label{subsub: set of nice sample paths}

We begin by introducing a set of probability one consisting of sample paths which are well-behaved, in a sense that will be made clear below. For this purpose, we need two technical lemmas. The first one establishes asymptotic upper bounds, which are uniform over time, for the number of tasks sharing a server pool and the threshold.

\begin{lemma}
	\label{lem: upper bound for number of tasks and threshold}
	There exists a positive constant $B_T \in \N$ such that $q_n(t, B_T + 1) = 0$ and $\ell_n(t) \leq B_T$ for all $t \in [0, T]$ and all sufficiently large $n$ with probability one.
\end{lemma}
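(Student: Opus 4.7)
The plan is in three steps: bound the total mass, then the threshold, then the maximum count per pool. The third step is the main obstacle.

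\emph{Step 1: total mass and threshold.} Since arrivals are the only source of tasks, $u_n(t) \leq u_n(0) + \calN_n^\lambda(t)/n$ for every $t \in [0, T]$. Combining the standing hypothesis $u_n(0) \leq B$, the boundedness of $\lambda$ by some constant $K$, and \eqref{seq: fslln for arrivals} on $\Gamma_T$, one obtains $u_n(t) \leq \bar u \defeq B + KT + 1$ for all $t \in [0, T]$ and all sufficiently large $n$. Each up-jump of $\ell_n$ at an arrival time $\tau$ requires $q_n(\tau^-, h_n(\tau^-)) \geq 1 - 1/n$, so that $u_n(\tau^-) \geq (1 - 1/n)\bigl(\ell_n(\tau^-) + \Delta\bigr)$. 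Combining this with the total-mass bound and $n \geq 2$ gives $\ell_n(\tau) \leq 2\bar u$; since down-jumps never raise $\ell_n$, I conclude $\ell_n(t) \leq L \defeq \max\{B, 2 \bar u\}$ for all $t \in [0, T]$ and all sufficiently large $n$ on $\Gamma_T$.

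\emph{Step 2: reducing the maximum-count bound to counting third-rule dispatches.} A pool can reach a count strictly above $L + \Delta$ only through the third dispatching rule, since the first two rules dispatch only to pools with fewer than $h_n \leq L + \Delta$ tasks. Each firing of the third rule requires $q_n(\tau^-, h_n(\tau^-)) = 1$, triggers a threshold increment, and selects the receiving pool uniformly over all $n$ pools via the variable $U_j$. I would bound the number $K_n^{iii}(T)$ of such firings by estimating the minimum time between consecutive threshold changes: after an up-jump, $q_n(\cdot, \ell_n)$ starts at least at $1 - 1/n$ and must descend to at most $\alpha_n$ for the next down-jump to occur, which by the uniform bound on departure rates from Step 1 and the hypothesis $\liminf_{n \to \infty} n^{\gamma_0}(1 - \alpha_n) > 0$ requires time of order at least $n^{-\gamma_0}$. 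A symmetric argument applies after each down-jump, and since $[0, T]$ has finite length this yields $K_n^{iii}(T) \leq Cn^{\gamma_0}$ for some constant $C$ on $\Gamma_T$ for all large enough $n$.

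\emph{Step 3: balls-in-bins conclusion.} Conditional on the firing times, independence of the $U_j$'s makes the pools selected at the at most $Cn^{\gamma_0}$ third-rule dispatches iid uniform over $\{1, \dots, n\}$. A union bound over pools and the estimate $\binom{m}{k} \leq m^k / k!$ then yield
\begin{equation*}
\prob\bigl(\text{some pool receives at least } k \text{ third-rule tasks},\ K_n^{iii}(T) \leq Cn^{\gamma_0}\bigr) \leq \frac{C^k}{k!}\, n^{1 + (\gamma_0 - 1)k}.
\end{equation*}
For $\gamma_0 < 1/2$ the exponent is strictly less than $-1$ when $k = 4$, so the sum over $n$ converges, and the Borel--Cantelli lemma forces every pool to receive at most three third-rule tasks in $[0, T]$, almost surely, for all large enough $n$. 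Setting $B_T \defeq L + \Delta + 3$ then yields both conclusions of the lemma. The principal difficulty is that the minimum-time estimate between consecutive threshold changes requires Chernoff-type concentration for departures from pools with exactly $\ell_n$ tasks, and that extracting an almost-sure rather than in-probability conclusion from the balls-in-bins estimate is exactly what forces the hypothesis $\gamma_0 < 1/2$ through the Borel--Cantelli summability criterion.
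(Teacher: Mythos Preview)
Your route is very different from the paper's and considerably heavier. The paper bypasses any analysis of third-rule dispatches or threshold oscillations: for $k > B/\Delta$ it observes that $q_n(k\Delta+1)$ can become positive only after the threshold has climbed to $k\Delta$, which in turn requires $q_n(k\Delta) \geq 1 - 1/n$ at some time and hence $u_n \geq k\Delta(1-1/n)$; a single Chernoff bound on $\calN_n^\lambda(T)$ then gives $\prob(E_n^k) \leq \e^{A(\e-1)n - (k\Delta - B)n + k\Delta}$ with $A = \int_0^T \lambda$, which is summable once $k\Delta > A(\e - 1) + B$, and Borel--Cantelli finishes. No property of $\alpha_n$ enters anywhere. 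Your Step~1 is essentially the same total-mass idea, but carried out on $\Gamma_T$ via the law of large numbers; the paper keeps a tail bound instead because it needs summability rather than almost-sure convergence.

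There are two genuine gaps in your proposal. First, Step~2 invokes the hypothesis $\liminf_{n\to\infty} n^{\gamma_0}(1-\alpha_n) > 0$ from Theorem~\ref{the: main theorem}, but this is \emph{not} a hypothesis of the lemma; the lemma is used to construct the set $\Omega_T$ of Proposition~\ref{prop: definition of Omega_T} before any condition on $\alpha_n$ is imposed, and the paper's proof does not refer to $\alpha_n$ at all. Second, your minimum-time claim in Step~2 is false as stated: after an up-jump the \emph{new} $h_n$ may again satisfy $q_n(h_n) \geq 1-1/n$, so another up-jump can occur at the very next arrival, and likewise for down-jumps. Such bursts of same-direction jumps have length at most $L/\Delta$, so the argument can be repaired, but the repair is not in your sketch. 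Even granting your extra hypothesis, you are working much harder than necessary: once $q_n(\tau^-, h_n(\tau^-)) < 1$ holds at some arrival~$\tau$, a short case analysis (using only $\alpha_n < 1 - 1/n$, which your hypothesis certainly implies for large $n$) shows this persists at every later arrival, so the third rule fires only during an initial run of at most $L/\Delta = O(1)$ consecutive arrivals --- not $O(n^{\gamma_0})$ --- and the balls-in-bins Step~3 becomes superfluous.
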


\begin{proof}
First let us prove that there exists $k \in \N$ such that $q_n(t, k \Delta + 1) = 0$ for all $t \in [0, T]$ and all sufficiently large $n$ with probability one. Note that the set
\begin{equation*}
	\bigcap_{m \geq 1} \bigcup_{n \geq m} \set{\omega \in \Omega}{q_n(\omega, t, k \Delta + 1) > 0\ \text{for some}\ t \in [0, T]}
\end{equation*}
is measurable for each $k \in \N$ since the occupancy processes have right-continuous sample paths. We must establish that this set has probability zero for some $k$. Consider the sets
\begin{equation*}
	E_n^k \defeq \set{\omega \in \Omega}{q_n(\omega, t, k \Delta + 1) > 0\ \text{for some}\ t \in [0, T]}.
\end{equation*}
By the Borel-Cantelli lemma, it is enough to prove that there exists $k \in \N$ such that
\begin{equation}
	\label{eq: borel-cantelli condition}
	\sum_{n = 1}^\infty \prob\left(E_n^k\right) < \infty.
\end{equation}

Recall from Section \ref{sec: notation and assumptions} that there exists $B$ such that $q_n(0, B + 1) = 0$ and $\ell_n(0) \leq B$ for all $n$ with probability one. By Remark \ref{rem: well positioned threahold}, this implies that $q_n(h_n) < 1$ holds at all times after at most $\floor{B}_\Delta$ arrivals for all $n > \floor{B}_\Delta$ with probability one. Hence, we may assume without loss of generality that $q_n(h_n) < 1$ holds from time zero, by possibly moving the origin of time to the point where this condition holds for the first time. Indeed, note that at this point $q_n(B + \floor{B}_\Delta + 1) = 0$ and $q_n(i) < 1$ for all $i \geq B + 1$ because no more than $\floor{B}_\Delta < n$ arrivals may have occurred. In addition, this in turn implies that $h_n \leq \left(\floor{B}_\Delta + 1\right) \Delta$ and therefore $\ell_n \leq \floor{B}_\Delta \Delta \leq B$. We thus assume that $q_n(0, h_n(0)) < 1$ almost surely, so tasks are always sent to pools with at most $h_n - 1$ tasks.

Suppose that $k > B / \Delta$. This implies that $q_n(0, k\Delta + 1) = 0$ and $\ell_n(0) \leq (k - 1) \Delta$. Only the server pools with at most $h_n - 1 \leq k\Delta - 1$ tasks can receive incoming tasks while $\ell_n \leq (k - 1)\Delta$, and therefore $q_n(k\Delta + 1)$ remains zero until $\ell_n$ reaches $k\Delta$. In order to reach $k\Delta$, the threshold must reach $(k - 1)\Delta$ first, and then $nq_n(k\Delta)$ must grow from $n - 1$ to $n$ for $\ell_n$ to increase. Hence,
\begin{align*}
	\prob\left(E_n^k\right) &\leq \prob\left(nq_n(t, k\Delta) \geq n - 1 \ \text{for some} \ t \in [0, T]\right) \\
	&\leq \prob\left(nu_n(t) \geq k\Delta \left(n - 1\right) \ \text{for some} \ t \in [0, T]\right),
\end{align*}
where the last inequality follows from $u_n(t) / k\Delta \geq q_n(t, k\Delta)$; this holds since $q_n(t)$ is a non-increasing sequence. The total number of tasks in the system at time $t$ is upper bounded by $\calN_n^\lambda(t) + nu_n(0)$, the number of arrivals during $[0, t]$ plus the initial number of tasks in the system. Hence,
\begin{align*}
	\prob\left(E_n^k\right) &\leq \prob \left(\calN_n^\lambda(t) + nu_n(0) \geq k\Delta (n - 1) \ \text{for some} \ t \in [0, T]\right) \\
	&\leq \prob \left(\calN_n^\lambda(t) + n B \geq k\Delta (n - 1) \ \text{for some} \ t \in [0, T]\right) \\
	&= \prob \left(\calN_n^\lambda(T) \geq (k\Delta - B) n - k\Delta \right).
\end{align*}
The second inequality follows from $q_n(0, B + 1) = 0$, which implies that $u_n(0) \leq B$.

Applying a Chernoff bound, we conclude that
\begin{align*}
	\prob\left(E_n^k\right) \leq \prob\left(\calN_n^\lambda(T) \geq (k\Delta - B) n - k\Delta \right) \leq \frac{\e^{A\left(\e - 1\right)n}}{\e^{(k\Delta - B)n - k\Delta}}, \quad \text{with} \quad A \defeq \int_0^T \lambda(s)ds < \infty.
\end{align*}
Set $k$ such that $A\left(\e - 1\right) + B - k \Delta < 0$, then \eqref{eq: borel-cantelli condition} holds, so $q_n(t, k \Delta + 1) = 0$ for all $t \in [0, T]$ and all large enough $n$ with probability one. Note that $q_n(t, (k + 1)\Delta) = 0$ for all $t \in [0, T]$ implies that the threshold can never reach $(k + 1)\Delta$ during $[0, T]$, so we may define $B_T \defeq (k + 1)\Delta$.
\end{proof}

The following lemma is a refinement of the functional strong law of large numbers for the Poisson process; a proof is provided in Section \ref{app: auxiliary results} of the appendix.

\begin{lemma}
	\label{lem: refined fslln for poisson process}
	Let $\calN$ be a Poisson process on $(\Omega, \calF, \prob)$ with unit rate. Then
	\begin{align*}
		\lim_{n \to \infty} \sup_{t \in [0, T]} n^\gamma\left|\frac{\calN(n t)}{n} - t\right| = 0 \quad \text{for all} \quad \gamma \in [0, 1 / 2) \quad \text{with probability one}.
	\end{align*}
\end{lemma}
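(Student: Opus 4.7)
The plan is to reduce the claim to an exponential tail bound for the centered Poisson process and then invoke Borel--Cantelli. The process $M(t) \defeq \calN(t) - t$ is a martingale with moment generating function $E[\exp(\theta M(t))] = \exp(t(e^\theta - 1 - \theta))$ for all $\theta \in \R$, so $\exp(\theta M(t))$ is a non-negative submartingale. Applying Doob's submartingale inequality to it with $\theta > 0$ yields
$$\prob\bigl(\sup_{t \le s} M(t) \ge a\bigr) \le \exp\bigl(-\theta a + s(e^\theta - 1 - \theta)\bigr),$$
and the symmetric bound for $-M$ follows with $\theta < 0$.

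I would apply this with $s = nT$ and $a = \varepsilon n^{1 - \gamma}$, and choose $\theta$ of order $n^{-\gamma}$, say $\theta = \varepsilon n^{-\gamma}/(2T)$, which is at most $1$ for all sufficiently large $n$. Using the elementary inequality $e^\theta - 1 - \theta \le \theta^2$ valid for $|\theta| \le 1$, both contributions $\theta a$ and $s \theta^2$ are of order $n^{1 - 2\gamma}$, so the right-hand side of the display becomes $\exp(-c_\varepsilon n^{1 - 2\gamma})$ for some constant $c_\varepsilon > 0$ depending only on $\varepsilon$ and $T$. The same estimate holds for the lower tail by an identical argument with negative $\theta$.

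Since $\gamma < 1/2$, the exponent $n^{1 - 2\gamma}$ is positive, so $\sum_n \exp(-c_\varepsilon n^{1-2\gamma}) < \infty$. The first Borel--Cantelli lemma then gives, for each fixed $\varepsilon > 0$, that $\sup_{t \in [0, nT]}|M(t)| \le \varepsilon n^{1 - \gamma}$ for all sufficiently large $n$ on a set of probability one. Intersecting over a countable sequence $\varepsilon_k \downarrow 0$ and rewriting,
$$\sup_{t \in [0, T]} n^\gamma\left|\frac{\calN(nt)}{n} - t\right| \;=\; n^{\gamma - 1}\sup_{t \in [0, nT]}|\calN(t) - t| \longrightarrow 0 \quad \text{almost surely,}$$
which is the stated conclusion.

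The only step requiring any real care is the optimization in $\theta$: the choice is essentially forced by matching the linear and quadratic contributions of $-\theta a + s(e^\theta - 1 - \theta)$, and the hypothesis $\gamma < 1/2$ enters at exactly the point where one needs the exponent $n^{1 - 2\gamma}$ to be summable. Everything else is routine application of the Chernoff--Doob argument.
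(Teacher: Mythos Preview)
Your proof is correct. Both your argument and the paper's follow the same overall scheme---Doob's maximal inequality applied to a submartingale derived from $M(t)=\calN(t)-t$, followed by Borel--Cantelli---but you use the exponential submartingale $\e^{\theta M(t)}$ and a Chernoff-type optimization, whereas the paper uses the polynomial submartingale $M(t)^{2k}$ and the fact that the $2k$-th central moment of a Poisson variable with mean $x$ is a polynomial of degree $k$ in $x$, choosing $k$ large enough that $k(1-2\gamma)>1$. Your route yields a stretched-exponential tail $\exp(-c_\varepsilon n^{1-2\gamma})$, which is automatically summable and slightly sharper; the paper's route gives only polynomial decay $n^{-(k-2k\gamma)}$ but avoids any optimization step and the auxiliary inequality $\e^\theta-1-\theta\le\theta^2$. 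Both arguments need $\gamma<1/2$ at exactly the same place: to make the relevant tail summable.
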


The last two lemmas are used to prove the next proposition, which defines a set of probability one consisting of well-behaved sample paths.

\begin{proposition}
	\label{prop: definition of Omega_T}
	There exist a positive constant $B_T \in \N$ and a set of probability one $\Omega_T \subset \Gamma_T$ such that the following two properties hold.
	\begin{enumerate}
		\item[(a)] For each $\omega \in \Omega_T$, there exists $n_T(\omega)$ such that
		\begin{equation}
			\label{eq: boundedness of occupancy states and thresholds}
			q_n(\omega, t, B_T + 1) = 0 \quad \text{and} \quad \ell_n(\omega, t) \leq B_T \quad \text{for all} \quad t \in [0, T] \quad \text{and all} \quad n \geq n_T(\omega).
		\end{equation}
		
		\item[(b)] The next limits hold on $\Omega_T$ for all $\gamma \in [0, 1/2)$.
	\end{enumerate}
	\begin{subequations}
		\begin{align}
			&\lim_{n \to \infty} q_n(0) = q(0), \label{seq: convergence of initial conditions 2} \\
			&\lim_{n \to \infty} \sup_{t \in [0, T]} n^\gamma\left|\frac{1}{n} \calN_n^\lambda(t) - \int_0^t \lambda(s)ds\right| = 0, \label{seq: fslln for arrivals 2} \\
			&\lim_{n \to \infty} \sup_{t \in [0, \mu iT]} n^\gamma\left|\frac{1}{n} \calN_i(nt) - t\right| = 0 \quad \text{for all} \quad i \geq 1. \label{seq: fslln for departures 2}
		\end{align}
	\end{subequations}
\end{proposition}

\begin{proof}
By \eqref{ass: convergence of initial conditions}, we have \eqref{seq: convergence of initial conditions 2} on $\Gamma_T$. Also, it follows from Lemma \ref{lem: upper bound for number of tasks and threshold} that there exists a positive constant $B_T \in \N$ such that property (a) holds on a subset of $\Gamma_T$ which has probability one. By Lemma \ref{lem: refined fslln for poisson process}, this subset can be chosen so that \eqref{seq: fslln for arrivals 2} and \eqref{seq: fslln for departures 2} hold.
\end{proof}

\subsubsection{Properties of total and tail mass processes}
\label{subsub: properties of total and tail mass functions}

Below we prove the aforementioned asymptotic dynamical properties of the total and tail mass processes, starting with the following proposition; the proof is given in Section \ref{app: auxiliary results} of the appendix.

\begin{proposition}
	\label{prop: total number of tasks}
	For each $\omega \in \Omega_T$, the sequence $\set{u_n(\omega)}{n \geq 1}$ converges uniformly over $[0, T]$ to the function $\map{u}{[0, T]}{[0, \infty)}$ defined in \eqref{eq: total number of tasks} by
	\begin{equation*}
		u(0) \defeq \sum_{i = 1}^\infty q(0, i) \quad \text{and} \quad u(t) \defeq u(0)\e^{-\mu t} + \int_0^t \rho(s) \mu \e^{-\mu(t - s)}ds \quad \text{for all} \quad t \in [0, T].
	\end{equation*}
\end{proposition}

As mentioned in Section \ref{sec: notation and assumptions}, the last proposition is simply the functional law of large numbers for an infinite-server system. However, it is not straightforward that this law holds with probability one under the coupled construction of sample paths adopted in Section \ref{subsub: coupled construction of sample paths exponential case}; Proposition \ref{prop: total number of tasks} establishes this fact. The next proposition provides an asymptotic upper bound for certain tail mass processes, under specific conditions concerning the occupancy and threshold processes.

\begin{proposition}
	\label{prop: tail number of tasks}
	Suppose that the following two conditions hold for a given $\omega \in \Omega_T$ and a given increasing sequence $\calK$ of natural numbers.
	\begin{enumerate}
		\item[(a)] The sequence $\set{q_k(\omega)}{k \in \calK}$ converges to a function $q \in D_{\R^\N}[0, T]$ in the metric $\varrho$.
		
		\item[(b)] There exist $j \in \set{i\Delta}{i \geq 1}$ and $0 \leq t_0 < t_1 \leq T$ such that
		\begin{equation*}
			\ell_k(\omega, t) \leq j \quad \text{and} \quad q_k(\omega, t, j) < 1 \quad \text{for all} \quad t \in [t_0, t_1] \quad \text{and all} \quad k \in \calK.
		\end{equation*}
	\end{enumerate}
	Then the coordinate functions $q(i)$ are differentiable on $(t_0, t_1)$ for all $i > j$, and they satisfy
	\begin{equation*}
		\dot{q}(t, i) = -\mu i \left[q(t, i) - q(t, i + 1)\right] \quad \text{for all} \quad t \in (t_0, t_1) \quad \text{and all} \quad i > j.
	\end{equation*}
	Also, $\set{v_k(\omega, j + 1)}{k \in \calK}$ converges uniformly over $[0, T]$ to a function $v(j + 1)$ such that
	\begin{equation*}
		v(t, j + 1) < u(t_0)\e^{-\mu (t - t_0)} \quad \text{for all} \quad t \in [t_0, t_1].
	\end{equation*} 
\end{proposition}

\begin{proof}
It follows from (b) that $\calA_k(t, i) = \calA_k(t_0, i)$ for all $t \in [t_0, t_1]$ and all $i > j$, because tasks are only assigned to server pools with strictly fewer than $j$ tasks during $[t_0, t_1]$ in all the systems corresponding to the sequence $\calK$. Thus, \eqref{seq: occupancy state} implies that
\begin{equation}
	\label{eq: decomposition with no arrivals}
	q_k(t, i) = q_k(t_0, i) - \left[\calD_k(t, i) - \calD_k(t_0, i)\right] \quad \text{for all} \quad t \in [t_0, t_1] \quad \text{and all} \quad i > j.
\end{equation}

As indicated in Section \ref{app: relative compactness} of the appendix, convergence of a sequence of functions with respect to $\varrho$ implies uniform convergence over $[0, T]$ of the coordinate functions. Hence, (a) and \eqref{seq: fslln for departures 2} yield
\begin{equation}
	\label{eq: limit of departure processes}
	\lim_{k \to \infty} \sup_{t \in [0, T]} \left|\calD_k(t, i) - \int_0^t \mu i \left[q(i, s) - q(i, s + 1)\right]ds\right| = 0 \quad \text{for all} \quad i \geq 0.
\end{equation}
By Proposition \ref{prop: definition of Omega_T}, there exists $B_T > 0$ such that $q_k(i)$ is identically zero for all $i > B_T$ and all large enough $k \in \calK$; this implies, in particular, that $q(i)$ is identically zero as well for all $i > B_T$. Therefore, we conclude from (a), \eqref{eq: decomposition with no arrivals} and \eqref{eq: limit of departure processes} that
\begin{align*}
	\begin{array}{ll}
		\displaystyle q(t, i) = q(t_0, i) - \int_{t_0}^t \mu i\left[q(s, i) - q(s, i + 1)\right]ds & \quad \text{for} \quad j < i < B_T, \\ [3ex]
		\displaystyle q(t, i) = q(t_0, i) - \int_{t_0}^t \mu iq(s, i)ds & \quad \text{for} \quad i = B_T.
	\end{array}
\end{align*}
Proceeding by backward induction, starting from $i = B_T$, it is possible to conclude that $q(i)$ is differentiable on $(t_0, t_1)$ for all $j < i \leq B_T$, and such that $\dot{q}(i) = -\mu i \left[q(i) - q(i + 1)\right]$. As observed, $q(i)$ is identically zero for  all $i > B_T$, so this proves the first claim of the proposition.

The convergence of $\set{v_k(j + 1)}{k \in \calK}$ follows from (a) and the observation that $q_k(i)$ is identically zero for all $i > B_T$ and all large enough $k \in \calK$. Moreover, $v(j + 1) = \sum_{i = j + 1}^{B_T} q(i)$, and in particular, $v(j + 1)$ is differentiable on $(t_0, t_1)$. It follows that
\begin{equation*}
	\dot{v}(t, j + 1) = \sum_{i = j + 1}^{B_T} -\mu i \left[q(t, i) - q(t, i + 1)\right] = -\mu v(t, j + 1) - \mu j q(t, j + 1) \quad \text{for all} \quad t \in (t_0, t_1).
\end{equation*}
Note that $q(j + 1)$ is a non-negative function, thus
\begin{equation*}
	v(t, j + 1) \leq v(t_0, j + 1) \e^{-\mu(t - t_0)} < u(t_0) \e^{-\mu(t - t_0)} \quad \text{for all} \quad t \in [t_0, t_1].
\end{equation*}
The last inequality is strict since $j \geq 1$.
\end{proof}

\subsection{Evolution of the threshold}
\label{sub: evolution of the threshold}

In this section we prove Theorem \ref{the: main theorem exponential case}. For this purpose, we fix an integer $m \geq 0$ and an interval $[a, b] \subset [0, T]$. As in the statement of Theorem \ref{the: main theorem exponential case}, we assume that the control parameters $\alpha_n$ satisfy \eqref{ass: conditions on alpha}, that the offered load is $(m, \Delta)$-bounded on $[a, b]$ and that the length of this interval is strictly larger than $\sigma(a, b, m, \Delta)$.

The first step towards establishing Theorem \ref{the: main theorem exponential case} is to prove that all large enough constants $\sigma$ have the next property: the threshold is smaller than or equal to $m \Delta$ along the interval $[a + \sigma, b]$ in all large enough systems with probability one. This step is carried out in Section \ref{subsub: preliminary results}, which also provides an upper bound for the infimum of the constants $\sigma$ having the latter property, and in addition establishes \eqref{seq: main result 3}. The next step towards proving Theorem \ref{the: main theorem exponential case} is to demonstrate that $\sigma > \sigma(a, b, m , \Delta)$ implies that the threshold is in fact equal to $m \Delta$ along the interval $[a + \sigma, b]$ in all sufficiently large systems with probability one, which is done in Section \ref{subsub: proof of main result exponential case}; the latter statement corresponds to \eqref{seq: main result 1}, and \eqref{seq: main result 2} is also proven in Section \ref{subsub: proof of main result exponential case}.

\subsubsection{Preliminary results}
\label{subsub: preliminary results}

As mentioned above, we begin by establishing that there exists a constant $\sigma$ such that the threshold is upper bounded by $m \Delta$ along the interval $[a + \sigma, b]$ in all sufficiently large systems with probability one. This is done in the following proposition, which also provides an upper bound for the infimum of the constants $\sigma$ having the latter property.

\begin{proposition}
	\label{prop: bounded threshold}
	Suppose the hypotheses of Theorem \ref{the: main theorem exponential case} hold, and fix a constant
	\begin{align*}
		\sigma > \sigma_\bd(a, b, m, \Delta) \defeq \begin{cases}
			\frac{1}{\mu}\left[\log\left(\frac{u(a) - \rho_{\max}}{(m + 1) \Delta - \rho_{\max}}\right)\right]^+	&	\text{if} \quad u(a) > \rho_{\max}, \\
			0	&	\text{if} \quad u(a) \leq \rho_{\max},
		\end{cases}
	\end{align*}
	with $\rho_{\max}$ as in Definition \ref{def: (Delta, m)-bounded}. For each $\omega \in \Omega_T$, there exists $n_\bd(\omega)$ such that
	\begin{align*}
		\ell_n(\omega, t) \leq m \Delta \quad \text{for all} \quad t \in [a + \sigma, b] \quad \text{and all} \quad n \geq n_\bd(\omega).
	\end{align*}
\end{proposition}

\begin{proof}
It follows from \eqref{eq: total number of tasks} that the next inequality holds for all $t \in [a , b]$:
\begin{equation}
	\label{eq: upper bound for u}
	\begin{split}
		u(t) &= u(a)\e^{-\mu(t - a)} + \int_a^t \rho(s) \mu \e^{-\mu(t - s)}ds \\
		&\leq u(a)\e^{-\mu(t - a)} + \int_a^t \rho_{\max} \mu \e^{-\mu(t - s)}ds = \rho_{\max} + \left[u(a) - \rho_{\max}\right]\e^{-\mu(t - a)}.
	\end{split}
\end{equation}
We fix an arbitrary $\omega \in \Omega_T$, which is omitted from the notation for brevity. By Definition~\ref{def: (Delta, m)-bounded}, we may consider positive constants $\delta$ and $\varepsilon$ such that
\begin{align*}
	\frac{\rho_{\max}}{(m + 1) \Delta} < 1 - \delta = \frac{\rho_{\max} + \varepsilon}{(m + 1) \Delta}.
\end{align*}
Note that $\varepsilon$ can be expressed in terms of $\delta$. In other words, if $\delta$ is given, then $\varepsilon$ is determined. For each positive $\delta$, satisfying the previous inequality, and each $\theta \in (0, 1)$, we define
\begin{align*}
	\sigma(\delta, \theta) &\defeq \min \set{s \geq 0}{\left[u(a) - \rho_{\max}\right]\e^{-\mu s} \leq \theta \varepsilon} \\
	&= \begin{cases}
		\frac{1}{\mu}\left[\log\left(\frac{u(a) - \rho_{\max}}{\theta\left[(1 - \delta)(m + 1) \Delta - \rho_{\max}\right]}\right)\right]^+	&	\text{if} \quad u(a) > \rho_{\max}, \\
		0	&	\text{if} \quad u(a) \leq \rho_{\max}.
	\end{cases}
\end{align*}
Observe that $\sigma_\bd(a, b, m, \Delta)$ is the infimum of $\delta + \sigma(\delta, \theta)$. Therefore, we can choose $\delta$ and $\theta$ such that $\sigma > \delta + \sigma(\delta, \theta)$. Below we assume that $\delta$ and $\theta$ enforce the latter condition.

By Proposition \ref{prop: definition of Omega_T} and \eqref{ass: conditions on alpha}, there exists $n_0$ such that the following statements hold for all $n \geq n_0$.
\begin{enumerate}
	\item[(i)] $\ell_n(t) \leq B_T$ for all $t \in [0, T]$.
	
	\item[(ii)] $\calN_n^\lambda$ has at least $B_T - m \Delta$ jumps on each subinterval of $[a, b]$ of length at least $\delta$; recall that the load is $\Delta$-bounded in $[a, b]$ and thus $\lambda$ is lower bounded by a positive constant in $[a, b]$.
	
	\item[(iii)] $1 - \delta < \alpha_n < 1 - 1 / n$.
\end{enumerate}
Furthermore, it follows from Proposition \ref{prop: total number of tasks} that there exists $n_\bd \geq n_0$ such that
\begin{equation*}
	\sup_{t \in [0, T]} \left|u_n(t) - u(t)\right| \leq (1 - \theta)\varepsilon \quad \text{for all} \quad t \in [0, T] \quad \text{and all} \quad n \geq n_\bd.
\end{equation*}

Suppose that $j \geq (m + 1) \Delta$ and $n \geq n_\bd$. If $t \in [a, b]$, then
\begin{align*}
	q_n(t, j) \leq \frac{1}{j} \sum_{i = 1}^j q_n(t, i) \leq \frac{u_n(t)}{j} &\leq \frac{u(t) + (1 - \theta)\varepsilon}{(m + 1) \Delta} \\ 
	&\leq \frac{\rho_{\max} + \left[u(a) - \rho_{\max}\right]\e^{-\mu(t - a)} + (1 - \theta)\varepsilon}{(m + 1) \Delta}.
\end{align*}
The first inequality follows from the fact that $q_n(t) \in Q$ is a non-increasing sequence, and the last inequality follows from \eqref{eq: upper bound for u}. If $t \in [a + \sigma(\delta, \theta), b]$, then $t - a \geq \sigma(\delta, \theta)$ and thus
\begin{align*}
	&q_n(t, j) \leq \frac{\rho_{\max} + \varepsilon}{(m + 1) \Delta} = 1 - \delta < \alpha_n < 1 - \frac{1}{n} \\
	&\text{for all} \quad j \geq (m + 1) \Delta, \quad t \in [a + \sigma(\delta, \theta), b], \quad n \geq n_\bd.
\end{align*}

The third inequality implies that the number of server pools with at least $j \geq (m + 1)\Delta$ tasks is strictly smaller than $n - 1$. This implies that threshold increases cannot occur along the interval $[a + \sigma(\delta, \theta), b]$ provided that $n \geq n_{\bd}$ and $\ell_n \geq m\Delta$, because a threshold increase requires that the number of server pools with at least $h_n$ tasks is at least $n - 1$. Moreover, if $\ell_n(t) \geq (m + 1) \Delta$ for $t \in [a + \sigma(\delta, \theta), b]$ and $n \geq n_{\bd}$, then $q_n(t, \ell_n(t)) <\alpha_n$, which implies that the threshold will decrease with the next arrival. Therefore, in this case the threshold decreases with each arrival until it reaches $m \Delta$, and this occurs in at most $\delta$ units of time by (i) and (ii). Combining the above remarks, we conclude that $\ell_n(t) \leq m\Delta$ for all $t \in [a + \sigma , b]$ and $n \geq n_{\bd}$.
\end{proof}

The next result is a consequence of Propositions \ref{prop: tail number of tasks} and \ref{prop: bounded threshold}, and it implies \eqref{seq: main result 3}.

\begin{corollary}
	\label{cor: decay of the tail}
	Suppose that the assumptions of Theorem \ref{the: main theorem exponential case} hold, and fix a constant $\sigma > \sigma_\bd(a, b, m, \Delta)$. Then we have
	\begin{equation*}
		\limsup_{n \to \infty} \sup_{t \in [a + \sigma, b]} v_n(\omega, t, (m + 1)\Delta + 1)e^{\mu\left[t - (a + \sigma)\right]} \leq u(a + \sigma) \quad \text{for all} \quad \omega \in \Omega_T.
	\end{equation*}
\end{corollary}

\begin{proof}
We fix an arbitrary $\omega \in \Omega_T$. For brevity, $\omega$ is omitted from the notation and we set $r \defeq (m + 1) \Delta$. Suppose that the statement of the corollary does not hold. Then there exist $\varepsilon > 0$ and an increasing sequence of natural numbers $\calK$ such that
\begin{align*}
	\sup_{t \in [a + \sigma, b]} v_k(t, r + 1)e^{\mu\left[t - (a + \sigma)\right]} > u(a + \sigma) + \varepsilon \quad \text{for all} \quad k \in \calK.
\end{align*}

By Proposition \ref{prop: relative compactness of occupancy state sample paths}, we may assume without loss of generality that $\set{q_k}{k \in \calK}$ has a limit with respect to $\varrho$; this may require to replace $\calK$ by a subsequence. Furthermore, by Proposition \ref{prop: bounded threshold} we may also assume that $\ell_k(t) \leq m\Delta$ for all $t \in [a + \sigma, b]$ and all $k \in \calK$.

Note that $\ell_k(t) \leq m \Delta$ for all $t \in [a + \sigma, b]$ implies that $q_k(t, r) \leq 1 - 1 / k$ for all $t \in [a + \sigma, b]$. Indeed, no arrival can result in $q_k(r)$ going from $1 - 1 / k$ to $1$ because such an arrival would also result in the threshold increasing to $r = (m + 1) \Delta$. Consequently, Proposition \ref{prop: tail number of tasks} holds with $j = r$ along the interval $[a + \sigma, b]$, and in particular, there exists a function $v(r + 1)$ such that
\begin{align*}
	&v(t, r + 1) < u(a + \sigma)e^{-\mu\left[t - (a + \sigma)\right]} \quad \text{for all} \quad t \in [a + \sigma, b], \\
	&\lim_{k \to \infty} \sup_{t \in [a + \sigma, b]} \left|v_k(t, r + 1) - v(t, r + 1)\right| = 0.
\end{align*}
This leads to a contradiction, so the statement of the corollary must hold.
\end{proof}

\subsubsection{Proof of Theorem \ref{the: main theorem exponential case}}
\label{subsub: proof of main result exponential case}

We have already established that $\sigma > \sigma_\bd(a, b, m , \Delta)$ implies that the threshold is upper bounded by $m \Delta$ along the interval $[a + \sigma, b]$ in all large enough systems with probability one, and we have shown that \eqref{seq: main result 3} holds. It remains to prove that $\sigma > \sigma(a, b, m , \Delta)$ implies that the threshold is in fact equal to $m \Delta$ in all sufficiently large systems with probability one, and to establish \eqref{seq: main result 2}. To this end, we introduce the processes $\delta_n(j)$, defined as
\begin{equation}
	\label{eq: error process}
	\delta_n(t, j) \defeq \frac{1}{n}\calN_n^\lambda(t) - \int_0^t \lambda(s)ds - \sum_{i = 1}^{j} \left[\calD_n(t, i) - \int_0^t \mu i \left[q_n(s, i) - q_n(s, i + 1)\right]ds\right]  
\end{equation}
for all $t \in [0, T]$ and all $j \geq 1$. Observe that \eqref{seq: fslln for arrivals 2} and \eqref{seq: fslln for departures 2} imply that
\begin{equation}
	\label{eq: error process goes to zero}
	\lim_{n \to \infty} \sup_{t \in [0, T]} n^\gamma |\delta_n(\omega, t, j)| = 0 \quad \text{for all} \quad j \geq 1, \quad \gamma \in [0, 1 / 2) \quad \text{and} \quad \omega \in \Omega_T.
\end{equation}

The following two technical lemmas make use of the latter definition and property.

\begin{lemma}
	\label{lem: lemma 1 for main result}
	Fix $\omega \in \Omega_T$ and $j \in \set{i\Delta}{i \geq 1}$. Suppose that there exist an increasing sequence $\calK$ of natural numbers and two sequences of random times $0 \leq \tau_{k, 1} \leq \tau_{k, 2} \leq T$ such that
	\begin{equation*}
		\ell_k(\omega, t) \leq j \quad \text{and} \quad q_k(\omega, t, j) < 1 \quad \text{for all} \quad t \in \left[\tau_{k, 1}(\omega), \tau_{k, 2}(\omega)\right) \quad \text{and} \quad k \in \calK.
	\end{equation*}
	Then the next inequality holds for all $t \in \left[\tau_{k, 1}(\omega), \tau_{k, 2}(\omega)\right]$ and all $k \in \calK$:
	\begin{equation*}
		\sum_{i = 1}^j q_k(\omega, t, i) - \sum_{i = 1}^j q_k\left(\omega, \tau_{k, 1}(\omega), i\right) \geq \int_{\tau_{k, 1}(\omega)}^t \left[\lambda(s) - \mu j\right]ds - 2\sup_{s \in [0, T]} \left|\delta_k(\omega, s, j)\right|.
	\end{equation*}
\end{lemma}

\begin{proof}
We omit $\omega$ from the notation for brevity. It follows from \eqref{seq: occupancy state} that
\begin{equation*}
	\sum_{i = 1}^j q_k(t, i) - \sum_{i = 1}^j q_k\left(\tau_{k, 1}, i\right) = \sum_{i = 1}^j \left[\calA_k(t, i) - \calA_k\left(\tau_{k, 1}, i\right)\right] - \sum_{i = 1}^j \left[\calD_k(t, i) - \calD_k\left(\tau_{k, 1}, i\right)\right].
\end{equation*}
Note that all tasks are assigned to server pools with at most $j - 1$ tasks during $(\tau_{k, 1}, t]$, so the first term on the right-hand side can be expressed as follows:
\begin{equation*}
	\sum_{i = 1}^j \left[\calA_k(t, i) - \calA_k\left(\tau_{k, 1}, i\right)\right] = \frac{1}{k}\left[\calN_k^\lambda(t) - \calN_k^\lambda\left(\tau_{k, 1}\right)\right].
\end{equation*}

Using the process $\delta_k(j)$ defined in \eqref{eq: error process}, we may write
\begin{align*}
	\sum_{i = 1}^j q_k(t, i) - \sum_{i = 1}^j q_k\left(\tau_{k, 1}, i\right) &= \int_{\tau_{k, 1}}^t \lambda(s)ds \\
	&- \sum_{i = 1}^j \int_{\tau_{k, 1}}^t \mu i \left[q_k(s, i) - q_k(s, i + 1)\right]ds + \delta_k(t, j) - \delta_k(\tau_{k, 1}, j) \\
	&\geq \int_{\tau_{k, 1}}^t \left[\lambda(s) - \mu j\right]ds - 2\sup_{s \in [0, T]} \left|\delta_k(s, j)\right|.
\end{align*}
The last inequality follows from the fact that
\begin{equation*}
	\sum_{i = 1}^j i\left[q_k(s, i) - q_k(s, i + 1)\right] = \sum_{i = 1}^j q_k(s, i) - jq_k(s, j + 1) \leq j \quad \text{if} \quad s \in [0, T] \quad \text{and} \quad k \in \calK.
\end{equation*}
This completes the proof.
\end{proof}

\begin{lemma}
	\label{lem: lemma 2 for main result}
	Suppose that the hypotheses of Theorem \ref{the: main theorem exponential case} hold. Fix $\omega \in \Omega_T$ and $j \in \set{i\Delta}{0 \leq i \leq m}$. Also, assume that there exist an increasing sequence $\calK \subset \N$, a constant $\varepsilon > 0$ and two sequences of random times $a + \varepsilon \leq \zeta_{k, 1} \leq \zeta_{k, 2} \leq b$ such that
	\begin{equation*}
		q_k\left(\omega, \zeta_{k, 1}(\omega), j\right) = 1 \quad \text{and} \quad \ell_k(\omega, t) \leq j \quad \text{for all} \quad t \in \left[\zeta_{k, 1}(\omega), \zeta_{k, 2}(\omega)\right] \quad \text{and} \quad k \in \calK.
	\end{equation*}
	For all sufficiently large $k \in \calK$ and all $\gamma \in [0, 1/2)$, we have
	\begin{equation*}
		\ell_k(\omega, t) = j \quad \text{and} \quad q_k(\omega, t, j) \geq 1 - k^{-\gamma} \quad \text{for all} \quad t \in \left[\zeta_{k, 1}(\omega), \zeta_{k, 2} (\omega)\right].
	\end{equation*}
\end{lemma}

\begin{proof}
We omit $\omega$ from the notation for brevity, and we assume that $j > 0$ since otherwise the statement holds trivially. Fix $\gamma \in [0, 1/2)$ and consider the random times defined as
\begin{align*}
	&\xi_{k, 2} \defeq \inf \set{t \geq \zeta_{k, 1}}{q_k(t, j) \leq \max\{\alpha_k, 1 - k^{-\gamma}\}}, \\
	&\xi_{k, 1} \defeq \sup \set{t \leq \xi_{k, 2}}{q_k(t, j) = 1}.
\end{align*}
Note that $\lambda$ is lower bounded by a positive constant in $[a, b]$ because the load is $\Delta$-bounded in $[a, b]$. Thus, it follows from Remark \ref{rem: well positioned threahold} that $q_k(t, h_k(t)) < 1$ for all $t \geq a + \varepsilon$ and all large enough $k$. Also, $q_k(\zeta_{k, 1}, j) = 1$ and $j$ is a multiple of $\Delta$, which implies that $h_k(\zeta_{k, 1}) > j$ and thus $\ell_k(\zeta_{k, 1}) = j$. We conclude that $\ell_k(t) = j$ for all $t \in \left[\zeta_{k, 1}, \zeta_{k, 2} \wedge \xi_{k, 2}\right]$ and all $k \in \calK$. Indeed, the threshold is upper bounded by $j$ along $\left[\zeta_{k, 1}, \zeta_{k, 2}\right]$ and the threshold can only decrease if $q_k(j) \leq \alpha_k$ right before an arrival epoch. Thus, it suffices to establish that $\xi_{k, 2} \geq \zeta_{k, 2}$ for all large enough $k \in \calK$.

In order to prove this, we show that
\begin{equation}
	\label{eq: statement to prove lemma}
	j - \sum_{i = 1}^j q_k(t, i) = \sum_{i = 1}^j q_k\left(\xi_{k, 1}^-, i\right) - \sum_{i = 1}^j q_k(t, i) < \min\left\{1 - \alpha_k, k^{-\gamma}\right\} \quad \text{if} \quad t \in \left[\xi_{k, 1}, \zeta_{k, 2} \wedge \xi_{k, 2}\right],
\end{equation}
for all sufficiently large $k \in \calK$. This statement implies that $q_k(t, j) > \max\{\alpha_k, 1 - k^{-\gamma}\}$ for all $t \in \left[\xi_{k, 1}, \zeta_{k, 2} \wedge \xi_{k, 2}\right]$.  By definition of $\xi_{k, 2}$, this in turn implies that $\xi_{k, 2} > \zeta_{k, 2}$.

To prove \eqref{eq: statement to prove lemma}, let $\tau_{k, 1} \defeq \zeta_{k, 2} \wedge \xi_{k, 1}$ and $\tau_{k, 2} \defeq \zeta_{k, 2} \wedge \xi_{k, 2}$. For all $t \in \left[\tau_{k, 1}, \tau_{k, 2}\right]$, we have
\begin{align*}
	\sum_{i = 1}^j q_k(t, i) - \sum_{i = 1}^j q_k\left(\tau_{k, 1}^-, i\right) &\geq \sum_{i = 1}^j q_k(t, i) - \sum_{i = 1}^j q_k\left(\tau_{k, 1}, i\right) - \frac{j}{k} \\
	&\geq \int_{\tau_{k, 1}}^t \left[\lambda(s) - \mu j\right]ds - 2\sup_{s \in [0, T]} \left|\delta_k(s, j)\right| - \frac{j}{k} \\
	&\geq - 2\sup_{s \in [0, T]} \left|\delta_k(s, j)\right| - \frac{j}{k}.
\end{align*}
For the first inequality, note that the processes $q_k(i)$ have jumps of size $1 / k$. The second inequality follows from Lemma \ref{lem: lemma 1 for main result}, since $\ell_k(t) = j$ and $q_k(t, j) < 1$ for all $t \in \left[\tau_{k, 1}, \tau_{k, 2}\right)$ and all $k \in \calK$. For the third inequality, recall that the offered load is $(m, \Delta)$-bounded on $[a, b]$, so $\lambda(s) \geq \mu j$ for all $s \in [a, b]$.

We conclude from \eqref{ass: conditions on alpha} and \eqref{eq: error process goes to zero} that
\begin{equation*}
	\lim_{k \to \infty} \frac{1}{1 - \alpha_k}\left[2\sup_{s \in [0, T]} |\delta_k(s, j)| + \frac{j}{k}\right] = \lim_{k \to \infty} \frac{1}{k^{\gamma_0}(1 - \alpha_k)}\left[2\sup_{s \in [0, T]} k^{\gamma_0}|\delta_k(s, j)| + \frac{j}{k^{1 - {\gamma_0}}}\right] = 0.
\end{equation*}
If $1 - \alpha_k$ is replaced by $k^{-\gamma}$ on the left-hand side, then the limit is also equal to zero by \eqref{eq: error process goes to zero}. Consequently, for all sufficiently large $k \in \calK$, we have
\begin{equation*}
	\sum_{i = 1}^j q_k(t, i) - \sum_{i = 1}^j q_k\left(\tau_{k, 1}^-, i\right) \geq - 2\sup_{s \in [0, T]} \left|\delta_k(s, j)\right| - \frac{j}{k} > - \min\left\{1 - \alpha_k, k^{-\gamma}\right\}
\end{equation*}
for all $t \in \left[\tau_{k, 1}, \tau_{k, 2}\right]$. This implies \eqref{eq: statement to prove lemma} for all large enough $k \in \calK$; note that \eqref{eq: statement to prove lemma} holds trivially when $\xi_{k, 1} > \zeta_{k, 2}$.
\end{proof}

\begin{proof}[Proof of Theorem \ref{the: main theorem exponential case}.]
As in the proof of Proposition \ref{prop: bounded threshold}, it follows from \eqref{eq: total number of tasks} that the following inequality holds for all times $a \leq a + \sigma_0 \leq t \leq b$:
\begin{equation}
	\label{eq: lower bound for u}
	\begin{split}
		u(t) &= u(a + \sigma_0)\e^{-\mu[t - (a + \sigma_0)]} + \int_{a + \sigma_0}^t \rho(s) \mu \e^{-\mu(t - s)}ds \\
		&\geq u(a + \sigma_0)\e^{-\mu[t - (a + \sigma_0)]} + \int_{a + \sigma_0}^t \rho_{\min} \mu \e^{-\mu(t - s)}ds \\
		&= \rho_{\min} + \left[u(a + \sigma_0) - \rho_{\min}\right]\e^{-\mu[t - (a + \sigma_0)]}.
	\end{split}
\end{equation}
In this equation, $\rho_{\min}$ is as in Definition \ref{def: (Delta, m)-bounded}. Fix $\sigma(a, b, m, \Delta) < \sigma < b - a$ as in the statement of the theorem, and for each $0 < \varepsilon < \rho_{\min} - m\Delta$ let
\begin{equation*}
	\sigma(\varepsilon) \defeq \min \set{s \geq 0}{\rho_{\min}\left(1 - e^{-\mu s}\right) - \varepsilon \geq m\Delta} = \frac{1}{\mu}\log \left(\frac{\rho_{\min}}{\rho_{\min} - m\Delta - \varepsilon}\right).
\end{equation*}
Recall the definitions of $\sigma(a, b, m, \Delta)$ and $\sigma_{bd}(a, b, m, \Delta)$ provided in Definition \ref{def: (Delta, m)-bounded} and Proposition~\ref{prop: bounded threshold}, respectively. We fix $\sigma_0 > \sigma_{\bd}(a, b, m, \Delta)$ and $\varepsilon > 0$ such that $\sigma = \sigma_0 + \sigma(\varepsilon) + \varepsilon$. This is possible since $\sigma_{\bd}(a, b, m, \Delta) + \sigma(\varepsilon) + \varepsilon$ decreases to $\sigma(a, b, m, \Delta)$ as $\varepsilon\downarrow0$.

We fix $\omega \in \Omega_T$, which is omitted from the notation, and we define the random times
\begin{equation*}
	\xi_n \defeq \inf \set{t \geq a + \sigma_0}{q_n(t, m \Delta) = 1}.
\end{equation*}
The proofs of \eqref{seq: main result 1} and \eqref{seq: main result 2} will be completed if we show that $\xi_n \leq a + \sigma$ for all large enough $n$. Indeed, if this claim is established, then \eqref{seq: main result 1} and \eqref{seq: main result 2} become a consequence of Lemma \ref{lem: lemma 2 for main result} with $j \defeq m\Delta$, $\zeta_{n, 1} \defeq \xi_n$ and $\zeta_{n, 2} \defeq b$. In order to see why, observe that $\ell_n(t) \leq m \Delta$ for all $t \in [a + \sigma_0, b]$ and all sufficiently large $n$ by Proposition \ref{prop: bounded threshold}, so the hypothesis of Lemma \ref{lem: lemma 2 for main result} holds.

To prove that $\xi_n \leq a + \sigma$ for all large enough $n$, we will establish that
\begin{equation}
	\label{eq: limsup of xi}
	\limsup_{n \to \infty} \xi_n \leq a + \sigma_0 + \sigma(\varepsilon) < a + \sigma_0 + \sigma(\varepsilon) + \varepsilon = a + \sigma.
\end{equation}
If $m = 0$, then $\xi_n = a + \sigma_0$ for all $n$ and the above inequality holds. Thus, we assume that $m \geq 1$. In order to prove \eqref{eq: limsup of xi}, we assume that \eqref{eq: limsup of xi} does not hold and we arrive to a contradiction.

Suppose then that \eqref{eq: limsup of xi} is false. This implies that there exists an increasing sequence $\calK$ of natural numbers such that $\xi_k > a + \sigma_0 + \sigma(\varepsilon)$ for all $k \in \calK$. Furthermore, by Propositions \ref{prop: relative compactness of occupancy state sample paths} and \ref{prop: bounded threshold}, this sequence may be chosen so that the next two properties hold.
\begin{enumerate}
	\item[(i)] $\set{q_k}{k \in \calK}$ converges to some function $q \in D_{\R^\N}[0, T]$ with respect to $\varrho$.
	
	\item[(ii)] $\ell_k(t) \leq m \Delta$ for all $t \in [a + \sigma_0, b]$ and all $k \in \calK$.
\end{enumerate}
The definition of $\xi_k$ implies that $q_k(t, m\Delta) < 1$ for all $t \in [a + \sigma_0, a + \sigma_0 + \sigma(\varepsilon)] \subset [a + \sigma_0, \xi_k)$. Properties (i) and (ii), together with the last observation, imply that the hypotheses of Proposition~\ref{prop: tail number of tasks} hold with $j \defeq m \Delta$, $t_0 \defeq a + \sigma_0$ and $t_1 \defeq a + \sigma_0 + \sigma(\varepsilon)$.

Let $v(m \Delta + 1)$ be the function defined in the statement of Proposition \ref{prop: tail number of tasks}, as the uniform limit of the processes $v_k(m \Delta + 1)$ over the interval $[0, T]$. It follows from Propositions \ref{prop: total number of tasks} and \ref{prop: tail number of tasks} that
\begin{equation*}
	\sup_{t \in [0, T]} \left|u_k(t) - v_k(t, m\Delta + 1) - \left[u(t) - v(t, m \Delta + 1)\right]\right| \leq \varepsilon
\end{equation*}
for all sufficiently large $k \in \calK$. For each of these $k \in \calK$, we have
\begin{align*}
	\sum_{i = 1}^{m \Delta} q_k\left(a + \sigma_0 + \sigma(\varepsilon), i\right) &= u_k\left(a + \sigma_0 + \sigma(\varepsilon)\right) - v_k(a + \sigma_0 + \sigma(\varepsilon), m \Delta + 1) \\
	&\geq u\left(a + \sigma_0 + \sigma(\varepsilon)\right) - v(a + \sigma_0 + \sigma(\varepsilon), m \Delta + 1) - \varepsilon \\
	&> \rho_{\min} + \left[u(a + \sigma_0) - \rho_{\min}\right]\e^{-\mu\sigma(\varepsilon)} - u(a + \sigma_0)\e^{-\mu\sigma(\varepsilon)} - \varepsilon \\
	&= \rho_{\min} \left(1 - \e^{-\mu\sigma(\varepsilon)}\right) - \varepsilon = m \Delta.
\end{align*}
The third inequality follows from \eqref{eq: lower bound for u} and Proposition \ref{prop: tail number of tasks}, and the last equality follows from the definition of $\sigma(\varepsilon)$. This is a contradiction, so we conclude that \eqref{eq: limsup of xi} holds, and this establishes \eqref{seq: main result 1} and \eqref{seq: main result 2}. Recall that \eqref{seq: main result 3} holds by Corollary \ref{cor: decay of the tail}, thus the proof of Theorem \ref{the: main theorem exponential case} is complete.
\end{proof}

\section{Coxian service times}
\label{sec: coxian service times}

In this section we prove Theorem \ref{the: main theorem coxian case}. For this purpose, we assume that the basic learning scheme is used, that $\lambda$ is constant over time and that service times are Coxian distributed. Below we proceed as indicated in Section \ref{sub: coxian service times}.

\subsection{Relative compactness of occupancy processes}
\label{sub: relative compactness coxian case}

In Section \ref{subsub: coupled construction of sample paths coxian case} we construct the sample paths of the occupancy states $s_n$ and the thresholds $\ell_n$ on a common probability space for all $n$. The sample paths of the occupancy states lie in the space $D_{\R^{\N^r}}[0, T]$ of all c\`adl\`ag functions on $[0, T]$ with values in $\R^{\N^r}$, equipped with the topology of uniform convergence. In Section \ref{subsub: relative compactness of sample paths coxian case} we establish that the sequence $\set{s_n}{n \geq 1}$ is almost surely relatively compact.

Before we continue, we introduce some notation and we define a metric on $D_{\R^{\N^r}}[0, T]$. A server pool is said to have occupancy profile $\nu = \left(\nu(1), \dots, \nu(r)\right) \in \N^r$ if it has exactly $\nu(m)$ tasks in phase $m$ for each $m$. Fix an enumeration $\set{\nu_i}{i \geq 0}$ of $\N^r$ and consider the metric $d$ defined by
\begin{equation*}
	d(x, y) \defeq \sum_{i = 0}^\infty \frac{\min \left\{|x(\nu_i) - y(\nu_i)|, 1\right\}}{2^i} \quad \text{for all} \quad x, y \in \R^{\N^r}.
\end{equation*}
The latter metric generates the product topology of $\R^{\N^r}$. The topology of uniform convergence equipped to $D_{\R^{\N^r}}[0, T]$ is generated by the following metric:
\begin{equation*}
	\varrho(x, y) \defeq \sup_{t \in [0, T]} d\left(x(t), y(t)\right) \quad \text{for all} \quad x, y \in D_{\R^{\N^r}}[0, T].
\end{equation*}
Note that the previous definitions generalize the ones introduced in Section \ref{sub: relative compactness exponential case}, where $r = 1$. Also, the specific choice of the enumeration $\set{\nu_i}{i \geq 0}$ is not particularly relevant since convergence with respect to $\varrho$ is equivalent to uniform convergence of the coordinate functions.

\subsubsection{Coupled construction of sample paths}
\label{subsub: coupled construction of sample paths coxian case}

The sample paths of the occupancy states and thresholds are defined as deterministic functions of the following stochastic primitives.

\begin{itemize}
	\item \textit{Driving Poisson processes:} a family $\set{\calN_{(\nu, i, j)}}{\nu \in \N^r,\ i \in \{0, \dots, r\}\ \text{and}\ j \in \{0, 1\}}$ of unit-rate independent Poisson processes, defined on a common probability space $(\Omega_D, \calF_D, \prob_D)$.
	
	\item \textit{Selection variables:} a sequence $\set{U_j}{j \geq 1}$ of independent and identically distributed uniform random variables with values on $[0, 1)$, defined on a common probability space $(\Omega_S, \calF_S, \prob_S)$.
	
	\item \textit{Initial conditions:} a family $\set{s_n(0)}{n \geq 1}$ of random variables describing the initial conditions of the systems, defined on a common probability space $(\Omega_I, \calF_I, \prob_I)$ and satisfying the assumptions introduced in Section \ref{sec: notation and assumptions}.
\end{itemize}

Consider the product probability space of $(\Omega_D, \calF_D, \prob_D)$, $(\Omega_S, \calF_S, \prob_S)$ and $(\Omega_I, \calF_I, \prob_I)$; denote its completion by $(\Omega, \calF, \prob)$. The occupancy states and thresholds are defined on the latter space from a set of equations involving the stochastic primitives. To write these equations, we introduce some notation which is analogous to that of Section \ref{subsub: coupled construction of sample paths exponential case}.

For each occupancy state $s \in S$, we define the intervals
\begin{equation*}
	I_i(s) \defeq \left[1 - \sum_{j = i}^\infty s\left(\nu_j\right), 1 - \sum_{j = i + 1}^\infty s\left(\nu_j\right)\right) \quad \text{for all} \quad i \geq 0.
\end{equation*}
These intervals form a partition of $[0, 1)$ such that the length of $I_i(s)$ is the fraction of server pools with occupancy profile $\nu_i$. Let $\Sigma_i \defeq \sum_{m = 1}^r \nu_i(m)$ denote the total number of tasks in a server pool with profile $\nu_i$, and define $q \in Q$ as in \eqref{eq: brief and detail descriptors}, but in terms of $s$. If $q(j) < 1$, then we let
\begin{equation*}
	J_i(s, j) \defeq \left[\frac{1 - \sum_{k = i}^\infty s\left(\nu_k\right) \ind{\Sigma_k < j}}{1 - q(j)}, \frac{1 - \sum_{k = i + 1}^\infty s\left(\nu_k\right)\ind{\Sigma_k < j}}{1 - q(j)}\right) \quad \text{for all} \quad i \geq 0.
\end{equation*}
These intervals yield another partition of $[0, 1)$. In this case, the length of $J_i(s, j)$ is the fraction of server pools with occupancy profile $\nu_i$ but only among those server pools with at most $j - 1$ tasks in total; we adopt the convention that $J_i(s, j) \defeq \emptyset$ for all $i \geq 0$ when $q(j) = 1$.

If $\ell \in \N$ represents the threshold and $q(\ell) < 1$, then the length of $J_i(s, \ell)$ is equal to the probability of picking a server pool with occupancy profile $\nu_i$ uniformly at random among those with strictly less than $\ell$ tasks in total. Similarly, if $h \defeq \ell + \Delta$, $q(\ell) = 1$ and  $q(h) < 1$, then the length of $J_i(s, h)$ is equal to the probability of picking a server pool with occupancy profile $\nu_i$ uniformly at random among those with at least $\ell$ and strictly fewer than $h$ tasks in total. We define
\begin{equation*}
	r_{ij}(s, \ell) \defeq \begin{cases}
		\ind{U_j \in J_i(s, \ell)} & \text{if} \quad \Sigma_i < \ell, \\
		\ind{q(\ell) = 1, U_j \in J_i(s, h)} & \text{if} \quad \ell \leq \Sigma_i < h, \\
		\ind{q(h) = 1, U_j \in I_i(s)} & \text{if} \quad \Sigma_i \geq h,
	\end{cases}
	\quad \text{for all} \quad i,j \geq 1.
\end{equation*}
Note that $r_{ij}(s, \ell) \in \{0, 1\}$ and that for a fixed $j$ there exists a unique $i$ such that $r_{ij}(s, \ell) = 1$. This family of random variables will be used to describe the dispatching decisions, as in Section~\ref{subsub: coupled construction of sample paths exponential case}. Specifically, if $s$ and $\ell$ are the occupancy state and the threshold, respectively, when the $j^{\text{th}}$ incoming task arrives, then this task is sent to a server pool with occupancy profile $\nu_i$ if and only if $r_{ij}(s, \ell) = 1$; this coincides with the dispatching rule described in Section \ref{sub: dispatching rule}.

We postulate that $\calN_n^\lambda(t) \defeq \calN_{(0, 0, 0)}(n\lambda t)$ is the number of tasks that arrive to the system with $n$ server pools during the interval $[0, t]$, we denote the jump times of $\calN_n^\lambda$ by $\set{\tau_{n,k}}{k \geq 1}$ and we define $\tau_{n,0} \defeq 0$. Note that $\calN_n^\lambda$ is a Poisson process with intensity $n\lambda$, as required by our model.

As in Section \ref{subsub: coupled construction of sample paths exponential case}, it is possible to construct c\`adl\`ag processes $s_n$ and $\ell_n$ such that the following equations hold on a set of probability one $\Gamma_0$ for all $n$ and all $t \in [0, T]$.
\begin{subequations}
	\begin{align}
		&s_n(t) = s_n(0) + \bar{\calA}_n(t) + \bar{\calP}_n(t) + \bar{\calD}_n(t), \label{seq: occupancy state coxian case}\\
		&\ell_n(t) = \floor{u_n(t)}_\Delta \label{seq: threshold coxian case}
	\end{align}
\end{subequations}
In the above equations, $u_n$ is defined in terms of $q_n$ as in \eqref{eq: total and tail mass processes}, whereas $q_n$ is defined in terms of $s_n$ using \eqref{eq: brief and detail descriptors}. In addition, the processes $\bar{\calA}_n$, $\bar{\calP}_n$ and $\bar{\calD}_n$ satisfy:
\begin{align*}
	&\bar{\calA}_n(t, \nu_i) = \calA_n(t, \nu_i - e_1) - \calA_n(t, \nu_i), \\
	&\bar{\calP}_n(t, \nu_i) = \sum_{m = 1}^{r - 1} \left[\calP_n(t, \nu_i + e_m - e_{m + 1}, m) - \calP_n(t, \nu_i, m)\right], \\
	&\bar{\calD}_n(t, \nu_i) = \sum_{m = 1}^r \left[\calD_n(t, \nu_i + e_m, m) -\calD_n(t, \nu_i, m)\right], \\
	&\calA_n(t, \nu_i) = \frac{1}{n} \sum_{j = 1}^{\calN_n^\lambda(t)} r_{ij}\left(s_n\left(\tau_{n, j}^-\right), \ell_n\left(\tau_{n, j}^-\right)\right), \\
	&\calP_n(t, \nu_i, m) = \frac{1}{n} \calN_{(\nu_i, m, 0)} \left(n\int_0^t p_m\mu_m\nu_i(m)s_n(\tau, \nu_i)d\tau\right), \\
	&\calD_n(t, \nu_i, m) = \frac{1}{n} \calN_{(\nu_i, m, 1)} \left(n\int_0^t (1 - p_m)\mu_m\nu_i(m)s_n(\tau, \nu_i)d\tau\right).
\end{align*}
Here $\{e_1, \dots, e_r\}$ denotes the canonical basis of $\R^r$. Also, $\calA_n(t, \nu)$ and $\calP_n(t, \nu, k)$ are defined as zero if the vector $\nu$ has a negative component. A coupled construction of all the above processes can be performed by forward induction on the jumps of the driving Poisson processes since the initial number of tasks in the system is almost surely finite for all $n$; this precludes an infinite number of events in finite time with probability one.

The above construction endows the processes $s_n$ and $\ell_n$ with the intended statistical behavior. In particular, the process $\calA_n(\nu_i)$ counts the arrivals to server pools with occupancy profile $\nu_i$, the process $\calP_n(\nu_i, m)$ counts transitions from phase $m$ to phase $m + 1$ of tasks in server pools with profile~$\nu_i$, and the process $\calD_n(\nu_i, m)$ counts departures of tasks in phase $m$ from server pools with occupancy profile $\nu_i$. Observe that \eqref{seq: occupancy state coxian case} is equivalent to \eqref{seq: occupancy state} if $r = 1$, whereas \eqref{seq: threshold coxian case} now corresponds to the basic learning scheme.

\subsubsection{Relative compactness of sample paths}
\label{subsub: relative compactness of sample paths coxian case}

Below we state the relative compactness result mentioned at the start of Section \ref{sub: relative compactness coxian case}. The proof essentially relies on the decomposition \eqref{seq: occupancy state coxian case} and the fact that the coordinate processes $\calA_n(\nu)$, $\calP_n(\nu, m)$ and $\calD_n(\nu, m)$ have $O(n)$ intensities for all $\nu$ and $m$. In particular, the proof is analogous to that of Proposition \ref{prop: relative compactness of occupancy state sample paths} and is thus omitted.

\begin{proposition}
	\label{prop: relative compactness of occupancy state sample paths coxian case}
	There exists a set of probability one $\Gamma_T \subset \Gamma_0$ with the next property. The sequences $\set{\calA_n(\omega)}{n \geq 1}$, $\set{\calP_n(\omega, m)}{n \geq 1}$, $\set{\calD_n(\omega, m)}{n \geq 1}$ and $\set{s_n(\omega)}{n \geq 1}$ are relatively compact with respect to $\varrho$ for each $m$ and each $\omega \in \Gamma_T$. Also, they have the property that the limit of every convergent subsequence is a function with Lipschitz coordinates. 
\end{proposition}

\subsection{Asymptotic dynamical properties}
\label{sub: asymptotic dynamical properties coxian case}

In this section we establish a few asymptotic dynamical properties concerning the following four processes:
\begin{equation}
	\label{eq: total and tail mass processes coxian case}
	u_n \defeq \sum_{i = 1}^\infty q_n(i), \quad x_n \defeq \sum_{i = 1}^{\floor{\rho}_\Delta} q_n(i), \quad y_n \defeq \sum_{i = \floor{\rho}_\Delta + 1}^{\floor{\rho}_\Delta + \Delta} q_n(i) \quad \text{and} \quad z_n \defeq \sum_{i = \floor{\rho}_\Delta + \Delta + 1}^\infty q_n(i).
\end{equation}
At any given time, the server pools can be arranged as in the diagram of Figure \ref{fig: state variables}, ignoring the phases of tasks. The mass processes $x_n$, $y_n$ and $z_n$ correspond to the number of tasks in the columns of the latter diagram covered by the respective summations in \eqref{eq: total and tail mass processes coxian case}. The total mass process $u_n$, considered in Section \ref{sub: asymptotic dynamical properties exponential case} as well, represents the total number of tasks in the system, normalized by~$n$, or equivalently, the total mass in the diagram of Figure \ref{fig: state variables}.

\subsubsection{Set of nice sample paths}
\label{subsub: set of nice sample paths coxian case}

As in Section \ref{subsub: set of nice sample paths}, we begin by introducing a set of probability one consisting of  well-behaved sample paths. For this purpose, we need the following result, counterpart of Lemma \ref{lem: upper bound for number of tasks and threshold}; the proof is provided in Section \ref{app: auxiliary results} of the appendix.

\begin{lemma}
	\label{lem: upper bound for number of tasks and threshold coxian case}
	There exists a positive constant $B_T \in \N$ such that $q_n(t, B_T + 1) = 0$ and $\ell_n(t) \leq B_T$ for all $t \in [0, T]$ and all sufficiently large $n$ with probability one.
\end{lemma}

The next proposition defines the aforementioned set of well-behaved sample paths.

\begin{proposition}
	\label{prop: definition of Omega_T coxian case}
	There exist a positive constant $B_T \in \N$ and a set of probability one $\Omega_T \subset \Gamma_T$ such that the following two properties hold.
	\begin{enumerate}
		\item[(a)] For each $\omega \in \Omega_T$, there exists $n_T(\omega)$ such that
		\begin{equation}
			\label{eq: boundedness of occupancy states and thresholds coxian case}
			q_n(\omega, t, B_T + 1) = 0 \quad \text{and} \quad \ell_n(\omega, t) \leq B_T \quad \text{for all} \quad t \in [0, T] \quad \text{and all} \quad n \geq n_T(\omega).
		\end{equation}
		
		\item[(b)] The following limits hold on $\Omega_T$:
	\end{enumerate}
	\begin{subequations}
		\begin{align}
			&\lim_{n \to \infty} s_n(0) = s(0), \label{seq: convergence of initial conditions 2 coxian case} \\
			&\lim_{n \to \infty} \sup_{t \in [0, T]} \left|\frac{1}{n} \calN_n^\lambda(t) - \lambda t\right| = 0, \label{seq: fslln for arrivals 2 coxian case} \\
			&\lim_{n \to \infty} \sup_{t \in [0, \mu_i \nu(i) T]} \left|\frac{1}{n} \calN_{(\nu, i, j)}(nt) - t\right| = 0 \quad \text{for all} \quad (\nu, i, j) \in \N^r \times \{1, \dots, r\} \times \{0, 1\}. \label{seq: fslln for departures 2 coxian case}
		\end{align}
	\end{subequations}
\end{proposition}

\begin{proof}
It follows from Lemma \ref{lem: upper bound for number of tasks and threshold coxian case} that there exists a positive $B_T \in \N$ such that property (a) holds on a set $\Omega_T \subset \Gamma_T$ of probability one. By \eqref{ass: convergence of initial conditions} and the strong law of large numbers for the Poisson process, the latter set can be chosen so that property (b) holds as well.
\end{proof}

\subsubsection{Properties of mass processes}
\label{subsub: properties of mass processes}

Next we prove the asymptotic dynamical properties of the mass processes. The proof of the next proposition provided in Section \ref{app: auxiliary results} of the appendix.

\begin{proposition}
	\label{prop: total number of tasks coxian case}
	For each $\omega \in \Omega_T$, the sequence $\set{u_n(\omega)}{n \geq 1}$ converges uniformly over $[0, T]$ to the function $\map{u}{[0, T]}{[0, \infty)}$ defined in \eqref{eq: general total number of tasks} by
	\begin{equation*}
		u^m(0) \defeq \sum_{i = 0}^\infty \nu_i(m)s(0, \nu_i) \quad \text{and} \quad u(t) \defeq \sum_{m = 1}^r u^m(0)G_m(t) + \int_0^t \lambda G_1(t - s)ds.
	\end{equation*}
\end{proposition}

While the above law of large numbers is known to hold weakly, and even for more general service times, it is not immediate that it holds with probability one under the coupled construction of sample paths adopted in Section \ref{subsub: coupled construction of sample paths coxian case}; Proposition \ref{prop: total number of tasks coxian case} establishes this fact.

The following proposition pertains to the dynamics of the mass processes $x_n$, $y_n$ and $z_n$ along intervals of time where $\ell_n = \ell \defeq \floor{\rho}_\Delta$. Before stating it formally, we provide some intuition using the diagram of Figure \ref{fig: state variables}. Namely, at any given time, the server pools, represented by the rows in said diagram, can be arranged monotonically with respect to the total number of tasks that they currently have. The tasks on each row may be arranged in an arbitrary manner, ignoring the phase in which they are, but regardless of how tasks are arranged within each row, every task may leave the system at a rate that is at least
\begin{equation*}
	\eta \defeq \min \set{(1 - p_m)\mu_m}{1 \leq m \leq r} > 0.
\end{equation*}
If $\ell_n = \ell$, then at least one of the first $\ell + \Delta$ columns of the diagram is not full, which implies that new tasks are always dispatched to one of these columns. Similarly, tasks are sent to one of the first $\ell$ columns if $x_n < \ell = \ell_n$. Recall that $z_n$ represents the number of tasks outside of the first $\ell + \Delta$ columns, hence $z_n$ decreases at a rate larger than $\eta z_n$ while $\ell_n = \ell$. Likewise, $y_n$ decreases at a rate that is lower bounded by $\eta \left[y_n - \Delta q_n(\ell + \Delta + 1)\right] \geq \eta \left(y_n - \Delta z_n\right)$ while $x_n < \ell = \ell_n$.

\begin{proposition}
	\label{prop: mass processes coxian case}
	Let us fix $\omega \in \Omega_T$ and an increasing sequence $\calK$ of natural numbers, in the sequel we omit $\omega$ for brevity. Suppose that the following conditions hold.
	\begin{enumerate}
		\item[(a)] The sequence $\set{s_k}{k \in \calK}$ converges to a function $s \in D_{\R^{\N^r}}[0, T]$ with respect to $\varrho$.
		
		\item[(b)] There exist $0 \leq t_0 < t_1 \leq T$ such that
		\begin{equation*}
			\ell_k(t) = \floor{\rho}_\Delta \quad \text{for all} \quad t \in [t_0, t_1] \quad \text{and all} \quad k \in \calK.
		\end{equation*}
	\end{enumerate}
	Then the sequences $\set{x_k}{k \in \calK}$, $\set{y_k}{k \in \calK}$ and $\set{z_k}{k \in \calK}$ converge to absolutely continuous functions $x$, $y$ and $z$, respectively, uniformly over $[0, T]$. Also, the next properties hold for $t \in (t_0, t_1)$.
	\begin{enumerate}
		\item[(i)] If $x(t) < \floor{\rho}_\Delta$, then $\dot{y} \leq -\eta (y - \Delta z)$ almost everywhere on a neighborhood of $t$.
		
		\item[(ii)] Moreover, $\dot{z} \leq -\eta z$ almost everywhere on $[t_0, t_1]$.
	\end{enumerate}
\end{proposition}

\begin{proof}
The uniform convergence of $\set{x_k}{k \in \calK}$, $\set{y_k}{k \in \calK}$ and $\set{z_k}{k \in \calK}$ follows from (a) and \eqref{eq: boundedness of occupancy states and thresholds coxian case}; the latter equation implies that $s_k(\nu_i)$ is identically zero if $\Sigma_i > B_T$ for all large enough $k \in \calK$. Furthermore, it follows from Proposition \ref{prop: relative compactness of occupancy state sample paths coxian case} that $s(\nu_i)$ is Lipschitz continuous for all $i$, therefore $x$, $y$ and $z$ are absolutely continuous, in fact Lipschitz.

Assume that $x(t) < \floor{\rho}_\Delta$ for some $t \in (t_0, t_1)$ and write $\ell \defeq \floor{\rho}_\Delta$ for brevity. Since $x$ is continuous and $x_k$ converges uniformly to $x$, there exist $a < t < b$ such that
\begin{equation}
	\label{eq: hypothesis of (i)}
	x_k(\tau) < \ell = \ell_k(\tau) \quad \text{for all} \quad \tau \in [a, b] \subset (t_0, t_1) \quad \text{and all large enough} \quad k \in \calK.
\end{equation}
Below we establish (i) by proving that
\begin{equation*}
	y(\tau) - y(a) \leq -\int_a^\tau \eta \left[y(\zeta) - \Delta z(\zeta)\right]d\zeta \quad \text{for all} \quad \tau \in [a, b].
\end{equation*}

Suppose that $\tau$ and $k$ are such that \eqref{eq: boundedness of occupancy states and thresholds coxian case} and \eqref{eq: hypothesis of (i)} hold. By \eqref{eq: brief and detail descriptors} and \eqref{seq: occupancy state coxian case},
\begin{align*}
	y_k(\tau) - y_k(a) &= \sum_{j = \ell + 1}^{\ell + \Delta} \left[q_k(\tau, j) - q_k(a, j)\right] \\
	&= \sum_{j = \ell + 1}^{\ell + \Delta} \sum_{\Sigma_i \geq j} \left[s_k(\tau, \nu_i) - s_k(a, \nu_i)\right] = \sum_{j = \ell + 1}^{\ell + \Delta} \sum_{\sigma = j}^\infty \sum_{\Sigma_i = \sigma} \left[\bar{\calD}_k(\tau, \nu_i) - \bar{\calD}_k(a, \nu_i)\right].
\end{align*}
For the last identity, first observe that $\calA_k(\tau, \nu_i) - \calA_k(a, \nu_i) = 0$ if $\Sigma_i \geq \ell$ since $q_k(\ell) < 1$ along the interval $[a, b]$ by \eqref{eq: hypothesis of (i)}. In addition, note that
\begin{equation*}
	\sum_{\Sigma_i = \sigma} \bar{\calP}_k(\nu_i) = \sum_{\Sigma_i = \sigma} \sum_{m = 1}^{r - 1} \left[\calP_k(\nu_i + e_m - e_{m + 1}, m) - \calP_k(\nu_i, m)\right] = 0 \quad \text{for all} \quad \sigma \geq 0.
\end{equation*}

Now observe that
\begin{align*}
	\sum_{\Sigma_i = \sigma} \bar{\calD}_k(\nu_i) &= \sum_{\Sigma_i = \sigma} \sum_{m = 1}^r \left[\calD_k(\nu_i + e_m, m) - \calD_m(\nu_i, m)\right] \\
	&= \sum_{\Sigma_i = \sigma + 1} \sum_{m = 1}^r \calD_k(\nu_i, m) - \sum_{\Sigma_i = \sigma} \sum_{m = 1}^r \calD_k(\nu_i, m).
\end{align*}
As a result, we may write
\begin{subequations}
	\begin{align}
		&y_k(\tau) - y_k(a) = \sum_{j = \ell + 1}^{\ell + \Delta} \sum_{\sigma = j}^\infty \left[D_k(\sigma + 1) - D_k(\sigma)\right] = - \sum_{j = \ell + 1}^{\ell + \Delta} D_k(j), \label{seq: expression for yk}\\
		&D_k(\sigma) \defeq \sum_{\Sigma_i = \sigma} \sum_{m = 1}^r \left[\calD_k(\tau, \nu_i, m) - \calD_k(a, \nu_i, m)\right].
	\end{align}
\end{subequations}
Here the last identity in \eqref{seq: expression for yk} uses \eqref{eq: boundedness of occupancy states and thresholds coxian case}, which implies that $D_k(\sigma) = 0$ if $\sigma > B_T$.

Using \eqref{seq: fslln for departures 2 coxian case}, we conclude that
\begin{equation*}
	\lim_{k \to \infty} D_k(j) = \sum_{\Sigma_i = j} \sum_{m = 1}^r \int_a^\tau (1 - p_m)\mu_m\nu_i(m)s(\zeta, \nu_i)d\zeta \geq \int_a^\tau \sum_{\Sigma_i = j} \eta j s(\zeta, \nu_i)d\zeta.
\end{equation*}
Therefore, taking limits on both sides of \eqref{seq: expression for yk}, we obtain
\begin{equation*}
	y(\tau) - y(a) \leq - \int_a^\tau \sum_{j = \ell + 1}^{\ell + \Delta} \sum_{\Sigma_i = j} \eta j s(\zeta, \nu_i)d\zeta \leq - \int_a^\tau \eta \left[y(\zeta) - \Delta z(\zeta)\right] d\zeta.
\end{equation*}
For the last inequality, note that
\begin{align*}
	\sum_{j = \ell + 1}^{\ell + \Delta} \sum_{\Sigma_i = j} js_k(\nu_i) &= \sum_{j = \ell + 1}^{\ell + \Delta} j\left[q_k(j) - q_k(j + 1)\right] \\
	&= y_k + \ell q_k(\ell + 1) - (\ell + \Delta) q_k(\ell + \Delta + 1) \\
	&\geq y_k - \Delta q_k(\ell + \Delta + 1) \geq y_k - \Delta z_k.
\end{align*}
Clearly, the inequality between the extremes also holds if $s_k$, $y_k$ and $z_k$ are replaced by their limits $s$, $y$ and $z$. This completes the proof (i).

The proof of (ii) follows from similar arguments. Namely, $\ell_k = \ell$ implies that $u_k < \ell + \Delta$ and therefore $q_k(\ell + \Delta) < 1$. Consequently, $\calA_k(b, \nu_i) - \calA_k(a, \nu_i) = 0$ for all $t_0 \leq a < b \leq t_1$ and all $k \in \calK$ if $\Sigma_i \geq \ell + \Delta$. Then one may proceed as in the proof of (i).
\end{proof}

\subsection{Proof of Theorem \ref{the: main theorem coxian case}}
\label{sub: proof of the main result coxian case}

In this section we use the asymptotic dynamical properties of the mass processes to complete the proof of Theorem \ref{the: main theorem coxian case}.

\begin{proof}[Proof of Theorem \ref{the: main theorem coxian case}.]
Let $\ell \defeq \floor{\rho}_\Delta$ for brevity, and choose $\varepsilon > 0$ such that $\ell + 3\varepsilon < \rho < \ell + \Delta$. It follows from \eqref{eq: general total number of tasks} that $u$ is differentiable. Furthermore, $u(t) \to \rho$ and $\dot{u}(t) \to 0$ as $t \to \infty$. Therefore, there exists a time $t_0 \geq 0$ such that
\begin{equation}
	\label{eq: conditions on u}
	\ell + 3\varepsilon < u(t) < \ell + \Delta \quad \text{and} \quad \dot{u}(t) > -\eta\varepsilon \quad \text{for all} \quad t \in [t_0, T].
\end{equation}
The previous statement is trivial if $t_0 > T$, but if $T$ is large enough, then the statement holds for some $t_0 < T$. Let us fix some $\omega \in \Omega_T$, which we omit from the notation for brevity. Proposition \ref{prop: total number of tasks coxian case} states that $u_n$ converges uniformly over $[0, T]$ to $u$. Therefore, $\ell < u_n(t) < \ell + \Delta$, and thus $\ell_n(t) = \ell$, for all $t \in [t_0, T]$ and all large enough $n$. In particular, this proves \eqref{seq: main result coxian 1}.

Fix an increasing sequence of natural numbers. By Proposition \ref{prop: relative compactness of occupancy state sample paths coxian case}, there exists a subsequence $\calK$ such that $\set{s_k}{k \in \calK}$ converges to a function $s \in D_{\R^{\N^r}}[0, T]$ with respect to $\varrho$. Also, it follows from Proposition \ref{prop: mass processes coxian case} that $\set{x_k}{k \in \calK}$, $\set{y_k}{k \in \calK}$ and $\set{z_k}{k \in \calK}$ converge uniformly over $[0, T]$ to absolutely continuous functions $x$, $y$ and $z$, respectively. Moreover,
\begin{equation}
	\label{eq: upper bound for z}
	z(t) \leq z(t_0)\e^{-\eta(t - t_0)} \leq u(t_0)\e^{-\eta(t - t_0)} \quad \text{for all} \quad t \in [t_0, T]
\end{equation}
by property (ii) of Proposition \ref{prop: mass processes coxian case}. In particular,
\begin{equation*}
	z(t) < \frac{\varepsilon}{\Delta + 1} \quad \text{for all} \quad t \in [t_1, T], \quad \text{where} \quad t_1 \defeq t_0 + \frac{1}{\eta}\left[\log\left(\frac{(\Delta + 1)u(t_0)}{\varepsilon}\right)\right]^+.
\end{equation*}

Suppose that $x(t) < \ell$ for some $t \in (t_1, T)$, then the next statements hold almost everywhere on a neighborhood of $t$. Since $y = u - x - z > u - \ell - z$, Proposition \ref{prop: mass processes coxian case} and \eqref{eq: conditions on u} imply that
\begin{equation*}
	\dot{y} \leq -\eta \left(y - \Delta z\right) < -\eta \left[u - \ell - (\Delta + 1)z\right] < -2\eta\varepsilon.
\end{equation*}
As a result, we have $\dot{x} = \dot{u} - \dot{y} - \dot{z} \geq \dot{u} - \dot{y} > \eta\varepsilon$, where the last inequality uses \eqref{eq: conditions on u}. Thus,
\begin{equation}
	\label{eq: limit of x}
	x(t) = \ell \quad \text{for all} \quad t \in [\sigma, T] \quad \text{with} \quad \sigma \defeq t_1 + \frac{\ell}{\eta\varepsilon}.
\end{equation}
Here $\sigma$ is an upper bound for the time until $x$ reaches $\ell$ if $x(t_1) < \ell$, computed from $\dot{x} > \eta\varepsilon$. Also, $x$ cannot decrease once it reaches $\ell$ since $\dot{x} > \eta\varepsilon$ almost everywhere on $\set{t \in (t_1, T)}{x(t) < \ell}$.

We conclude from \eqref{eq: upper bound for z} and \eqref{eq: limit of x} that \eqref{seq: main result coxian 2} and \eqref{seq: main result coxian 3} hold with $c = u(t_0)$ along the sequence $\calK$. In fact, \eqref{seq: main result coxian 2} and \eqref{seq: main result coxian 3} hold without restricting the corresponding limits to the sequence $\calK$ since $t_0$, $t_1$, $\sigma$ and $c$ do not depend on $\calK$, they only depend on $\rho$, $\Delta$, $\eta$, $\varepsilon$ and $u$. Specifically, if \eqref{seq: main result coxian 2} and \eqref{seq: main result coxian 3} do not hold, then one may construct an increasing sequence of natural numbers which violates the latter equations. Nevertheless, reasoning as above, one obtains a subsequence $\calK$ along which \eqref{seq: main result coxian 2} and \eqref{seq: main result coxian 3} hold, which leads to a contradiction.
\end{proof}

%
%
%

\begin{appendices}
	
	Section \ref{app: relative compactness} of this appendix contains the proof of Proposition \ref{prop: relative compactness of occupancy state sample paths}, whereas Section \ref{app: auxiliary results} contains the proofs of several auxiliary results used throughout the paper.
	
	\section{Relative compactness of sample paths}
	\label{app: relative compactness}
	
	In this section we use a technique developed in \cite{bramson1998state} to demonstrate that there exists a set of probablity one $\Gamma_T$ where $\set{q_n}{n \geq 1}$ is relatively compact with respect to $\varrho$. First we define the set $\Gamma_T$ through the following proposition.
	
	\begin{proposition}
		\label{prop: definition of Gamma_T}
		There exists a set of probability one $\Gamma_T \subset \Gamma_0$ where:
		\begin{subequations}
			\begin{align}
				&\lim_{n \to \infty} q_n(0) = q(0), \label{seq: convergence of initial conditions} \\
				&\lim_{n \to \infty} \sup_{t \in [0, T]} \left|\frac{1}{n} \calN_n^\lambda(t) - \int_0^t \lambda(s)ds\right| = 0, \label{seq: fslln for arrivals} \\
				&\lim_{n \to \infty} \sup_{t \in [0, \mu iT]} \left|\frac{1}{n} \calN_i(nt) - t\right| = 0 \quad \text{for all} \quad i \geq 1. \label{seq: fslln for departures}
			\end{align}
		\end{subequations}
	\end{proposition}
	
	\begin{proof}
	This result is a straightforward consequence of \eqref{ass: convergence of initial conditions} and the functional strong law of large numbers for the Poisson process. The proof of \eqref{seq: fslln for arrivals} also relies on the fact that $\lambda$ is bounded.
	\end{proof}
	
	Let us fix an arbitrary $\omega \in \Gamma_T$, which is omitted for brevity. We will show that the sequences of sample paths $\set{\calA_n}{n \geq 1}$ and $\set{\calD_n}{n \geq 1}$ are relatively compact subsets of $D_{\R^\N}[0, T]$ and that the limit of every convergent subsequence is a function with Lipschitz coordinates. It will then follow from \eqref{seq: occupancy state} and \eqref{seq: convergence of initial conditions} that $\set{q_n}{n \geq 1}$ has the same properties.
	
	The next characterization of relative compactness with respect to $\varrho$ will be useful. Let $D[0, T]$ denote the space of real c\`adl\`ag functions on $[0, T]$, endowed with the uniform norm:
	\begin{equation*}
		\norm{x}_T = \sup_{t \in [0, T]} |x(t)| \quad \text{for all} \quad x \in D[0, T].
	\end{equation*}
	Note that a sequence of functions $x_n \in D_{\R^\N}[0, T]$ converges to a function $x \in D_{\R^\N}[0, T]$ with respect to $\varrho$ if and only if $x_n(i)$ converges to $x(i)$ with respect to $\norm{\scdot}_T$ for all $i \geq 0$.
	
	\begin{proposition}
		\label{prop: relative compactness from coordinates}
		A sequence $\set{x_n}{n \geq 1} \subset D_{\R^\N}[0, T]$ is relatively compact if and only if the sequences $\set{x_n(i)}{n \geq 1} \subset D[0, T]$ are relatively compact for all $i \geq 0$.
	\end{proposition}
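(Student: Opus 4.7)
The proof splits into the two directions of the biconditional. For the forward direction, the idea is to show that each coordinate projection $\pi_i : (D_{\R^\N}[0,T], \varrho) \to (D[0,T], \eta)$ defined by $\pi_i(x)(t) = x(t, i)$ is continuous, since continuous images of relatively compact sets are relatively compact. Continuity is immediate from the definition of $\varrho$: if $\varrho(x_n, x) \to 0$, then for every fixed $i$ and every $t \in [0,T]$ one has $\min\{|x_n(t,i) - x(t,i)|, 1\} \leq 2^i d(x_n(t), x(t))$, and taking the supremum over $t$ gives $\eta(\pi_i(x_n), \pi_i(x)) \to 0$ (for $n$ large enough the min is not truncated).

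For the reverse direction, the plan is to take an arbitrary subsequence of $\{x_n\}$ and produce, by a standard diagonal extraction, a further subsequence $\{x_{n_k}\}$ such that $x_{n_k}(i) \to y_i$ in $(D[0,T], \eta)$ for every $i \geq 0$, where each $y_i$ is some element of $D[0,T]$. Assemble these into the candidate limit by setting $y(t) \defeq (y_i(t))_{i \geq 0} \in \R^\N$. I then need two facts: that $y$ lies in $D_{\R^\N}[0,T]$, and that $\varrho(x_{n_k}, y) \to 0$.

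The càdlàg property of $y$ (with respect to the product metric $d$) follows coordinatewise: each $y_i$ is càdlàg, and because the series defining $d$ is dominated by the geometric series $\sum 2^{-i}$, the dominated convergence theorem lets me pass the right-continuity and existence of left limits from each coordinate to the product metric. For the convergence $\varrho(x_{n_k}, y) \to 0$, the argument is the usual tail-splitting: given $\varepsilon > 0$, choose $I$ so large that $\sum_{i > I} 2^{-i} < \varepsilon/2$, which bounds the tail of the series in $d$ uniformly over $t$ and $k$; then use uniform convergence in each of the finitely many coordinates $i = 0, 1, \dots, I$ (guaranteed by $\eta(\pi_i(x_{n_k}), y_i) \to 0$) to make the head of the series less than $\varepsilon/2$ for all $k$ sufficiently large.

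The main thing to be careful about is the diagonal extraction and the verification that $y$ is càdlàg in the product topology rather than merely coordinatewise; once those two points are in place, the two inequalities fit together and the tail-splitting is routine. No obstacle of a deeper nature arises, and no additional assumptions on the sequence beyond the coordinatewise relative compactness are needed.
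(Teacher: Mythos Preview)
Your proposal is correct and follows essentially the same route as the paper: both arguments use a diagonal extraction over the coordinates to produce a subsequence convergent in each $(D[0,T],\eta)$, then assemble the coordinate limits into a single element of $D_{\R^\N}[0,T]$. The paper's proof is terser---it omits the forward direction entirely and invokes the already-stated equivalence between $\varrho$-convergence and coordinatewise $\eta$-convergence rather than spelling out the tail-splitting and the c\`adl\`ag verification---but the underlying argument is the same.
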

	
	\begin{proof}
	We only need to prove the converse, so assume that the sequences $\set{x_n(i)}{n \geq 1}$ are relatively compact for all $i \geq 0$. Given an increasing sequence $\calK \subset \N$, we must show that there exists a subsequence of $\set{x_k}{k \in \calK}$ which converges with respect to $\varrho$. For this purpose, we may construct a family of increasing sequences $\set{\calJ_i}{i \geq 0}$ with the following properties.
	\begin{enumerate}
		\item[(i)] $\calJ_{i + 1} \subset \calJ_i \subset \calK$ for all $i \geq 0$.
		
		\item[(ii)] $\set{x_j(i)}{j \in \calJ_i}$ has a limit $x(i) \in D[0, T]$ for each $i \geq 0$.
	\end{enumerate}
	
	Consider the sequence $\set{k_j}{j \geq 1} \subset \calK$ such that $k_j$ is the $j^{\text{th}}$ element of $\calJ_j$. Then
	\begin{equation*}
		\lim_{j \to \infty} \norm{x_{k_j}(i) - x(i)}_T = 0 \quad \text{for all} \quad i \geq 0.
	\end{equation*} 
	Define $x \in D_{\R^\N}[0, T]$ so that its coordinates are the functions $x(i)$ introduced in~(ii). Then $x_{k_j}$ converges to $x$ with respect to $\varrho$, which completes the proof.
	\end{proof}
	
	As a result, it suffices to prove that the sequences $\set{\calA_n(i)}{n \geq 1}$ and $\set{\calD_n(i)}{n \geq 1}$ are relatively compact in $D[0, T]$ for all $i \geq 0$. Consider the sets
	\begin{equation*}
		L_M \defeq \set{x \in D[0, T]}{x(0) = 0\ \text{and}\ |x(t) - x(s)| \leq M|t - s|\ \text{for all}\ s, t \in [0, T]},
	\end{equation*}
	which are compact by the Arzel\'a-Ascoli theorem. For each $i \geq 0$, we prove that there exists $M_i > 0$ such that $\calA_n(i)$ and $\calD_n(i)$ approach $L_{M_i}$ as $n$ grows to infinity. Then we leverage the compactness of $L_{M_i}$ to show that the sequences $\set{\calA_n(i)}{n \geq 1}$ and $\set{\calD_n(i)}{n \geq 1}$ are relatively compact with respect to $\norm{\scdot}_T$. For this purpose, we introduce the spaces
	\begin{equation*}
		L_M^\varepsilon \defeq \set{x \in D[0, T]}{x(0) = 0\ \text{and}\ |x(t) - x(s)| \leq M|t - s| + \varepsilon\ \text{for all}\ s, t \in [0, T]}.
	\end{equation*}
	
	\begin{lemma}
		\label{lem: lemma from bramson}
		If $x \in L_M^\varepsilon$, then there exists $y \in L_M$ such that $\norm{x - y}_T \leq 4\varepsilon$.
	\end{lemma}
	
	The previous lemma is a restatement of \cite[Lemma 4.2]{bramson1998state}, and together with the next lemma, it implies that for each $i \geq 0$ there exists $M_i > 0$ such that $\calA_n(i)$ and $\calD_n(i)$ approach $L_{M_i}$.
	
	\begin{lemma}
		\label{lem: approximate lipschitz property}
		For all $i \geq 0$, there exist $M_i > 0$, and a vanishing sequence $\set{\varepsilon_{i, n} > 0}{n \geq 1}$, such that the functions $\calA_n(i)$ and $\calD_n(i)$ lie in $L_{M_i}^{\varepsilon_{i, n}}$ for all $n$.
	\end{lemma}
	
	\begin{proof}
	Let us fix $i \geq 1$, the statement holds trivially if $i = 0$. Recall that $\lambda$ is a bounded function, thus there exists $K > 0$ such that $|\lambda(t)| \leq K$ for all $t \in [0, T]$. We have
	\begin{align*}
		\left|\calA_n(t, i) - \calA_n(s, i)\right| &\leq \frac{1}{n} \left|\calN_n^\lambda(t) - \calN_n^\lambda(s)\right| \\
		&\leq \left|\int_s^t \lambda(u)du\right| + 2\sup_{r \in [0, T]} \left|\frac{1}{n}\calN_n^\lambda(r) - \int_0^r \lambda(v)dv\right| \\
		&\leq K|t - s| + 2\sup_{r \in [0, T]} \left|\frac{1}{n}\calN_n^\lambda(r) - \int_0^r \lambda(v)dv\right| \quad \text{for all} \quad s, t \in [0, T].
	\end{align*}
	By Proposition \ref{prop: definition of Gamma_T}, there exists a vanishing sequence $\set{\delta_{i, n}^1 > 0}{n \geq 1}$ such that
	\begin{align*}
		\left|\calA_n(t, i) - \calA_n(s, i)\right| \leq K|t - s| + \delta_{i, n}^1 \quad \text{for all} \quad s, t \in [0, T].
	\end{align*}
	
	For each $t \in [0, T]$, let us define
	\begin{align*}
		f_n(t, i) = \int_0^t \mu i\left[q_n(s, i) - q_n(s, i + 1)\right]ds,
	\end{align*}
	which is a non-decreasing function. Also, $f_n(0, i) = 0$ and $|f_n(t, i) - f_n(s, i)| \leq \mu i|t - s|$ for all $s, t \in [0, T]$. In particular, $f_n(T, i) \leq \mu i T$. Using these properties, we conclude that
	\begin{align*}
		\left|\calD_n(t, i) - \calD_n(s, i)\right| &\leq \frac{1}{n} \left|\calN_i\left(n f_n(t, i)\right) - \calN_i\left(n f_n(s, i)\right)\right| \\
		&\leq |f_n(t, i) - f_n(s, i)| + 2\sup_{r \in [0, T]} \left|\frac{1}{n}\calN_i\left(n f_n(r, i)\right) - f_n(r, i)\right| \\
		&\leq \mu i|t - s| + 2\sup_{r \in [0, \mu iT]} \left|\frac{1}{n}\calN_i(n r) - r\right| \quad \text{for all} \quad s, t \in [0, T].
	\end{align*}
	It follows from \eqref{seq: fslln for departures} that there exists a vanishing sequence $\set{\delta_{i, n}^2 > 0}{n \geq 1}$ such that
	\begin{align*}
		\left|\calD_n(t, i) - \calD_n(s, i)\right| \leq \mu i|t - s| + \delta_{i, n}^2 \quad \text{for all} \quad s, t \in [0, T].
	\end{align*}
	The proof is completed setting $M_i \defeq \max\{K, \mu i\}$ and $\varepsilon_{i, n} \defeq \max\{\delta_{i, n}^1, \delta_{i, n}^2\}$.
	\end{proof}
	
	We now establish the relative compactness of the occupancy state sample paths.
	
	\begin{proof}[Proof of Proposition \ref{prop: relative compactness of occupancy state sample paths}.]
	As above, we fix an arbitrary $\omega \in \Gamma_T$ and we omit it from the notation for brevity. It follows from \eqref{seq: occupancy state} and \eqref{seq: convergence of initial conditions} that it is enough to establish the result for the sequences $\set{\calA_n}{n \geq 1}$ and $\set{\calD_n}{n \geq 1}$. For this it suffices to show that $\set{\calA_n(i)}{n \geq 1}$ and $\set{\calD_n(i)}{n \geq 1}$ are relatively compact subsets of $D[0, T]$ for all $i \geq 0$, such that the limits of their convergent subsequences are Lipschitz; the sufficiency was explained above. We fix $i \geq 0$ and we do this just for $\set{\calA_n(i)}{n \geq 1}$ since the same arguments can be used if $\calA_n$ is replaced by $\calD_n$.
	
	Let $M_i$ and $\set{\varepsilon_{i, n}}{n \geq 1}$ be as in the statement of Lemma \ref{lem: approximate lipschitz property}. It follows from Lemma \ref{lem: lemma from bramson} that for each $n$ there exists $x_n(i) \in L_{M_i}$ such that $\norm{\calA_n(i) - x_n(i)}_T \leq 4\varepsilon_{i, n}$. Recall that $L_{M_i}$ is compact, thus every increasing sequence of natural numbers has a subsequence $\calK$ such that $\set{x_k(i)}{k \in \calK}$ converges to some function $x \in L_{M_i}$. Also, note that
	\begin{equation*}
		\limsup_{k \to \infty} \norm{\calA_k(i) - x(i)}_T \leq \lim_{k \to \infty} \norm{\calA_k(i) - x_k(i)}_T + \lim_{k \to \infty} \norm{x_k(i) - x(i)}_T = 0;
	\end{equation*}
	the limits are taken along $\calK$. This proves that every subsequence of $\set{\calA_n(i)}{n \geq 1}$ has a further subsequence which converges to a Lipschitz function.
	\end{proof}
	
	\section{Auxiliary results}
	\label{app: auxiliary results}
	
	Below are the proofs of some auxiliary results.
	
	\begin{proof}[Proof of Lemma \ref{lem: refined fslln for poisson process}.]
	It is enough to establish that
	\begin{equation*}
		\bigcap_{m \geq 1} \bigcup_{n \geq m} \set{\omega \in \Omega}{\sup_{t \in [0, T]} n^\gamma\left|\frac{\calN(\omega, n t)}{n} - t\right| \geq \varepsilon}
	\end{equation*}
	has probability zero for each $\gamma \in [0, 1/2)$ and each $\varepsilon > 0$. Let us fix $\gamma$ and $\varepsilon$, by the Borel-Cantelli lemma, it suffices to prove that
	\begin{equation*}
		\sum_{n = 1}^\infty \prob\left(\sup_{t \in [0, T]} \left|\calN(n t) - nt\right| \geq \varepsilon n^{1 - \gamma}\right) < \infty \quad \text{for all} \quad \varepsilon > 0.
	\end{equation*}
	
	For each $k \geq 1$, the process $\left[\calN(t) - t\right]^{2k}$ is a submartingale. It follows from Doob's inequality that
	\begin{equation*}
		\prob\left(\sup_{t \in [0, T]} \left|\calN(n t) - nt\right| \geq \varepsilon n^{1 - \gamma}\right) \leq \frac{\mu_{2k}(n T)}{\varepsilon^{2k}n^{2k(1 - \gamma)}},
	\end{equation*}
	where $\mu_i(x)$ denotes the $i^{\text{th}}$ central moment of the Poisson distribution with mean $x$. Below we prove that there exists $k$ such that the sum over $n$ of the expression on the right-hand side is finite.
	
	The central moments $\mu_i(x)$ are polynomials of $x$ such that
	\begin{equation*}
		\mu_1(x) = 0, \quad \mu_2(x) = x \quad \text{and} \quad \mu_{i + 1}(x) = x\left[\mu_i'(x) + i\mu_{i - 1}(x)\right] \quad \text{for all} \quad i \geq 2;
	\end{equation*}
	for the last identity see \cite[Equation (5.22)]{kendall1945advanced}. This recursion implies that $\mu_{2i}$ and $\mu_{2i + 1}$ have degree~$i$. Therefore, there exist coefficients $a_{k, j}$ such that
	\begin{equation*}
		\mu_{2k}(n T) = \sum_{j = 0}^k a_{k, j} n^j.
	\end{equation*}
	
	Fix $k$ such that $k - 2k \gamma > 1$ and observe that
	\begin{equation*}
		\frac{\mu_{2k}(n T)}{\varepsilon^{2k}n^{2k(1 - \gamma)}} = \sum_{j = 0}^k \frac{a_{k, j}}{\varepsilon^{2k}} \left(\frac{1}{n}\right)^{2k - j - 2k\gamma}.
	\end{equation*}
	To conclude, note that $2k - j - 2k\gamma \geq k - 2k\gamma > 1$ for all $0 \leq j \leq k$. Hence,
	\begin{align*}
		\sum_{n = 1}^\infty \frac{\mu_{2k}(n T)}{\varepsilon^{2k}n^{2k(1 - \gamma)}} \leq \sum_{n = 1}^\infty \left(\sum_{j = 0}^k \frac{a_{k, j}}{\varepsilon^{2k}}\right) \left(\frac{1}{n}\right)^{k - 2k\gamma} < \infty.
	\end{align*}
	This completes the proof.
	\end{proof}
	
	\begin{proof}[Proof of Proposition \ref{prop: total number of tasks}.]
	We fix an arbitrary $\omega \in \Omega_T$, which is omitted from the notation for brevity. It follows from Proposition \ref{prop: definition of Omega_T} that $q_n(i)$, $\calA_n(i)$ and $\calD_n(i)$ are identically zero for all large enough $n$ and all $i > B_T' \defeq B_T + \Delta$\footnote{If $B_T' = B_T$, a task can be sent to a server pool with $B_T$ tasks and at the same instant a task can leave from the same server pool, although this has probability zero; here $q_n(B_T + 1)$ remains zero but $\calA_n(B_T + 1)$ and $\calD_n(B_T + 1)$ have a jump. Setting $B_T' = B_T + \Delta$ precludes arrivals to server pools with $B_T'$ tasks since $\ell_n \leq B_T$.}. Consequently, we have
	\begin{align*}
		u_n(t) = \sum_{i = 1}^{B_T'} q_n(t, i) &= u_n(0) + \sum_{i = 1}^{B_T'} \left[\calA_n(t, i) - \calD_n(t, i)\right] \\
		&= u_n(0) + \frac{1}{n}\calN_n^\lambda(t) - \sum_{i = 1}^{B_T'} \calD_n(t, i)
	\end{align*}
	for all $t \in [0, T]$ and all large enough $n$.
	
	To prove the proposition, it suffices to establish that every increasing sequence of natural numbers has a subsequence $\calK$ such that $\set{u_k}{k \in \calK}$ converges uniformly over $[0, T]$ to $u$. For this purpose, we first observe that \eqref{seq: convergence of initial conditions 2} and \eqref{seq: fslln for arrivals 2} imply that
	\begin{equation*}
		\lim_{n \to \infty} u_n(0) = u(0) \quad \text{and} \quad \lim_{n \to \infty} \sup_{t \in [0, T]} \left|\frac{1}{n}\calN_n^\lambda(t) - \int_0^t \lambda(s)ds\right| = 0.
	\end{equation*}
	Given an increasing sequence of natural numbers, Proposition \ref{prop: relative compactness of occupancy state sample paths} implies that there exist some subsequence $\calK$ and a function $q \in D_{\R^\N}[0, T]$, such that $\set{q_k}{k \in \calK}$ converges to $q$ with respect to $\varrho$; note that $q(i)$ is identically zero for all $i > B_T'$. If we define $\tilde{u} \defeq \sum_{i = 1}^{B_T'} q(i)$, then $\set{u_k}{k \in \calK}$ converges to $\tilde{u}$ uniformly over $[0, T]$, and \eqref{seq: fslln for departures 2} yields
	\begin{align*}
		&\lim_{k \to \infty} \sup_{t \in [0, T]} \left|\sum_{i = 1}^{B_T'} \calD_k(t, i) - \int_0^t \mu \tilde{u}(s)ds\right| = \\
		&\lim_{k \to \infty} \sup_{t \in [0, T]} \left|\sum_{i = 1}^{B_T'} \left[\calD_k(t, i) - \int_0^t \mu i \left[q(s, i) - q(s, i + 1)\right]ds\right]\right| = 0.
	\end{align*}
	We conclude that $\map{\tilde{u}}{[0, T]}{[0, \infty)}$ satisfies the integral equation
	\begin{equation*}
		\tilde{u}(t) = u(0) + \int_0^t \left[\lambda(s) - \mu\tilde{u}(s)\right]ds \quad \text{for all} \quad t \in [0, T].
	\end{equation*}
	The unique solution to this integral equation is $u$. Therefore, the sequence $\set{u_n}{n \geq 1}$ converges uniformly over $[0, T]$ to $u$.
	\end{proof}
	
	\begin{proof}[Proof of Lemma \ref{lem: upper bound for number of tasks and threshold coxian case}.]
	It suffices to establish that there exists $k \in \N$ such that $u_n(t) < (k + 1)\Delta$ for all $t \in [0, T]$ and all sufficiently large $n$. Specifically, if $u_n(t) < (k + 1) \Delta$, then it follows from \eqref{seq: threshold coxian case} that $\ell_n(t) \leq k\Delta$, and clearly $q_n(t, (k + 1)\Delta) < 1$. In particular, incoming tasks are never sent to server pools with $(k + 1) \Delta$ tasks or more during $[0, T]$. Also, recall from Section \ref{sec: notation and assumptions} that $q_n(0, B + 1) = 0$ for all $n$ with probability one, so we can take $B_T = \max \{B, (k + 1)\Delta\}$.
	
	Therefore, it is enough to show that
	\begin{equation*}
		\bigcap_{m \geq 1} \bigcup_{n \geq m} \set{\omega \in \Omega}{u_n(\omega, t) \geq M\ \text{for some}\ t \in [0, T]}
	\end{equation*}
	has probability zero for some $M \in \N$. Consider the sets
	\begin{equation*}
		E_n^M \defeq \set{\omega \in \Omega}{u_n(\omega, t) \geq M\ \text{for some}\ t \in [0, T]}.
	\end{equation*}
	By the Borel-Cantelli lemma, it suffices to prove that there exists $M \in \N$ such that
	\begin{equation}
		\label{eq: borel-cantelli condition coxian case}
		\sum_{n = 1}^\infty \prob\left(E_n^M\right) < \infty.
	\end{equation}
	
	The total number of tasks in the system at time $t$ is upper bounded by $\calN_n^\lambda(t) + nu_n(0)$, the number of arrivals during $[0, t]$ plus the initial number of tasks in the system. Hence,
	\begin{align*}
		\prob\left(E_n^M\right) &\leq \prob \left(\calN_n^\lambda(t) + nu_n(0) \geq nM \ \text{for some} \ t \in [0, T]\right) \\
		&\leq \prob \left(\calN_n^\lambda(t) + n B \geq nM \ \text{for some} \ t \in [0, T]\right) = \prob \left(\calN_n^\lambda(T) \geq n\left(M - B\right) \right).
	\end{align*}
	The second inequality follows from $q_n(0, B + 1) = 0$, which implies that $u_n(0) \leq B$.
	
	Applying a Chernoff bound, we conclude that
	\begin{align*}
		\prob\left(E_n^M\right) \leq \prob\left(\calN_n^\lambda(T) \geq n(M - B) \right) \leq \frac{\e^{\lambda T \left(\e - 1\right)n}}{\e^{(M - B)n}} = \e^{\left[\lambda T(\e - 1) + B - M\right]n}.
	\end{align*}
	Therefore, \eqref{eq: borel-cantelli condition coxian case} holds for any $M > \lambda T (\e - 1) + B$.
	\end{proof}
	
	\begin{proof}[Proof of Proposition \ref{prop: total number of tasks coxian case}.]
	We fix an arbitrary $\omega \in \Omega_T$, which is omitted from the notation for brevity. It follows from Proposition \ref{prop: definition of Omega_T coxian case} that $s_n(\nu_i)$, $\calA_n(\nu_i)$, $\calP_n(\nu_i, m)$ and $\calD_n(\nu_i, m)$ are identically zero for all large enough $n$ whenever $\Sigma_i > B_T' \defeq B_T + \Delta$\footnote{If $B_T' = B_T$, a task can be sent to a server pool with $B_T$ tasks and at the same instant a task can leave from the same server pool, although this has probability zero; here $s_n(\nu_i)$ remains zero but $\calA_n(\nu_i - e_1)$ and $\calD_n(\nu_i)$ have a jump, for some $i$ with $\Sigma_i = B_T + 1$. Setting $B_T' = B_T + \Delta$ precludes arrivals to server pools with $B_T'$ tasks since $\ell_n \leq B_T$.}.
	
	Let $u_n^m \defeq \sum_{i = 0}^\infty \nu_i(m)s_n(\nu_i)$ denote the total number of tasks in the system in phase $m$, and observe that the following equations hold for all $t \in [0, T]$.
	\begin{subequations}
		\begin{align}
			&u_n^1(t) = u_n^1(0) + \frac{1}{n}\calN_n^\lambda(t) - \sum_{i = 0}^\infty \calP_n(t, \nu_i, 1) - \sum_{i = 0}^\infty \calD_n(t, \nu_i, 1), \label{seq: number of tasks in phase 1} \\
			&u_n^m(t) = u_n^m(0) + \sum_{i = 0}^\infty \left[\calP_n(t, \nu_i, m - 1) - \calP_n(t, \nu_i, m)\right] - \sum_{i = 0}^\infty \calD_n(t, \nu_i, m) \quad \text{if} \quad m \neq 1. \label{seq: number of tasks in phase k}
		\end{align}
	\end{subequations}
	These equations follow from \eqref{seq: occupancy state coxian case}. For example, the second term on the right-hand side of \eqref{seq: number of tasks in phase k} follows from the following arithmetic computations:
	\begin{align*}
		\sum_{i = 0}^\infty \nu_i(m)\bar{\calP}_n(\nu_i) &= \sum_{j = 1}^{r - 1}  \sum_{i = 0}^\infty \nu_i(m) \left[\calP_n(\nu_i +e_j - e_{j + 1}, j) - \calP_n(\nu_i, j)\right] \\
		&= \sum_{i = 0}^\infty \nu_i(m)\left[\calP_n(\nu_i +e_{m - 1} - e_m, m - 1) - \calP_n(\nu_i, m - 1)\right] \\
		&+ \sum_{i = 0}^\infty \nu_i(m)\left[\calP_n(\nu_i +e_m - e_{m + 1}, m) - \calP_n(\nu_i, m)\right] \\
		&= \sum_{\nu(m - 1) \geq 1} [\nu(m) + 1]\calP_n(\nu, m - 1) - \sum_{\nu \in \N^r} \nu(m)\calP_n(\nu, m - 1) \\
		&+ \sum_{\nu(m) \geq 1} [\nu(m) - 1]\calP_n(\nu, m) - \sum_{\nu \in \N^r} \nu(m)\calP_n(\nu, m) \\
		&= \sum_{i = 0}^\infty \left[\calP_n(t, \nu_i, m - 1) - \calP_n(t, \nu_i, m)\right].
	\end{align*}
	For the second and last identities, note that $\calP_n(\nu, j)$ is identically zero if $\nu(j) = 0$.
	
	For all sufficiently large $n$, the $i^{\text{th}}$ term in each of the summations appearing in \eqref{seq: number of tasks in phase 1} and \eqref{seq: number of tasks in phase k} is zero if $\Sigma_i > B_T'$, a condition that is met by all but finitely many $i$. Therefore, as in the proof of Proposition \ref{prop: total number of tasks}, it follows from \eqref{seq: convergence of initial conditions 2 coxian case}, \eqref{seq: fslln for arrivals 2 coxian case} and \eqref{seq: fslln for departures 2 coxian case} that every increasing sequence of natural numbers has a subsequence $\calK$ such that $\set{(u_k^1, \dots, u_k^r)}{k \in \calK}$ converges uniformly over $[0, T]$ to a function $(u^1, \dots, u^r)$ that solves
	\begin{align*}
		&\dot{u}^1 = \lambda - \mu_1u^1, \\
		&\dot{u}^m = p_{m - 1}\mu_{m - 1}u^{m - 1} - \mu_m u^m \quad \text{if} \quad m \neq 1.
	\end{align*}
	This system of differential equations has a unique solution with initial conditions $u^m(0)$ defined as in the statement of the proposition. Since $u_n^m(0)$ converges to $u^m(0)$ for all $m$ by \eqref{seq: convergence of initial conditions 2 coxian case}, it follows that $\set{(u_n^1, \dots, u_n^r)}{n \geq 1}$ converges uniformly over $[0, T]$ to this unique solution.
	
	The above arguments imply that $\set{u_n}{n \geq 1}$ converges uniformly over $[0, T]$ on $\Omega_T$. By the weak law of large numbers of an infinite-server system, $\set{u_n}{n \geq 1}$ converges weakly to the deterministic process defined by \eqref{eq: general total number of tasks}. Since $\Omega_T$ has probability one, it follows that $\set{u_n(\omega)}{n \geq 1}$ converges uniformly over $[0, T]$ to the function in \eqref{eq: general total number of tasks} for all $\omega \in \Omega_T$.
	\end{proof}
	
\end{appendices}
	
\newcommand{\noop}[1]{}
\bibliographystyle{IEEEtranS}
\bibliography{IEEEabrv,bibliography}
	
\end{document}